\documentclass[11pt]{amsart}
\usepackage{graphicx, xcolor}
\usepackage{amssymb}
\numberwithin{equation}{section}


\def\N{\mathbb{N}}
\def\R{\mathbb{R}}

\newcommand{\rdbrack}{]\!]}
\newcommand{\ldbrack}{[\![}

\newcommand{\bpsi}{{\psi}}


\def\l{\lambda}

\def\epsilon{\varepsilon}
\def\e{\varepsilon}


\newcommand\br{\begin{rem}}
\newcommand\er{\end{rem}}
\newcommand\bp{\begin{pmatrix}}
\newcommand\ep{\end{pmatrix}}
\newcommand\be{\begin{equation}}
\newcommand\ee{\end{equation}}
\newcommand\ba{\begin{equation}\begin{aligned}}
\newcommand\ea{\end{aligned}\end{equation}}




\setlength{\evensidemargin}{0in} \setlength{\oddsidemargin}{0in}
\setlength{\textwidth}{5.5in} \setlength{\topmargin}{0in}
\setlength{\textheight}{8in}







\newtheorem{theorem}{Theorem}[section]
\newtheorem{proposition}[theorem]{Proposition}
\newtheorem{lemma}[theorem]{Lemma}

\newtheorem{example}[theorem]{Example}
\newtheorem{remark}[theorem]{Remark}
\newtheorem{ans}[theorem]{Definition}

\voffset-.75cm
\hoffset-1.25cm
\textheight21cm
\textwidth15cm

\title{Asymptotic behavior of interface solutions to quasilinear parabolic equations with nonlinear forcing terms} 

\begin{document}

\begin{center}
LINDA MARIA DE CAVE\footnote{Sapienza Universit\`a di Roma, Dipartimento S.b.a.i., Roma (Italy). E-mail addresses: \texttt{linda.decave@sbai.uniroma1.it}, \texttt{lindamdecave@gmail.com}}, MARTA STRANI\footnote{Universit\'e Paris-Diderot, Institut de Math\'ematiques de Jussieu-Paris Rive Gauche, Paris (France). E-mail addresses: \texttt{marta.strani@imj-prg.fr}, \texttt{martastrani@gmail.com}.}
\end{center}

\vskip1cm

\begin{abstract}
We investigate the asymptotic behavior of solutions for  quasilinear parabolic equations in  bounded intervals. In particular, we are concerned with a special class  of solutions, called interface solutions, which exhibit e metastable behavior, meaning that their convergence towards the asymptotic configuration of the system is exponentially slow. The key of our analysis is a linearization around an approximation of the steady state of the problem, and the reduction of the dynamics to a one-dimensional motion, describing the slow convergence of the interfaces towards the equilibrium.
\end{abstract}

\maketitle



\pagestyle{myheadings}
\thispagestyle{plain}

\section{Introduction}

In this paper we study the asymptotic behavior  of interface solutions to the  initial-boundary value problem for quasilinear parabolic equations, i.e.
\begin{equation}\label{burgers}
 \left\{\begin{aligned}
& \partial_t u = \varepsilon \partial_x\left(a(x) \partial_x u \right)-G(u, \partial_x u),  &\qquad &x \in I=(-\ell,\ell), \ t \geq 0, \\
& a \, u(\pm \ell,t)+ b \,\partial_x u(\pm\ell,t)=u_{\pm}, &\qquad &t \geq 0,\\
& u(x,0)=u_0(x), &\qquad &x \in I,
  \end{aligned}\right.
\end{equation}
for some $\varepsilon, \ell>0$, $u_{\pm} \in \R$ and $a,b \geq0$. Concerning the function $a(x)$, we require $a \in W^{1,\infty}(I)$; as a consequence, there exist positive constants $\alpha, \beta \in \R^+$ such that
\begin{equation}\label{ipoa}
\alpha \leq a(x) \leq \beta, \quad  {\rm for \ all \ } x \in I \ ,
\end{equation}
so that the classical ellipticity and growth conditions are satisfied. Finally, concerning the initial datum $u_0$ and the nonlinear forcing term $G$, we assume  
\begin{equation*}u_0\in L^2(I),\quad G=G(z,w)\in C^1(\R^2).\end{equation*} 
In particular, we focus our attention to the   phenomenon known as {\bf metastability}, whereby the time dependent solution develops into a layered function in a relatively short time (usually of order one), and then converges towards its stable configuration in a time scale that can be extremely long, depending on the size of the viscosity parameter $\e$.

Such behavior has been extensively studied for a large class of one-dimensional evolutive PDEs; to name some of these results, we recall here the area of viscous scalar conservation laws, with the contributions \cite{BerKamSiv01, KimTzav01, LafoOMal95, MS, ReynWard95, Str13}, or phase transition problems, described by the Allen-Cahn and the Cahn-Hilliard equations equation (\cite{AlikBateFusc91, CarrPego89, FuscHale89, OttoRezn06, Pego89, Str13}). 

Results on metastability for systems of scalar equations are less common; the slow motion for  systems of conservation laws has been examined in \cite{KreiKreiLore08}, while in \cite{BetOrlSme11, Ris08} the authors describe the phenomenon of metastability for systems with a gradient structure, with an analysis that is entirely based on energy methods. Finally, we quote \cite{Str12}, where the one dimensional Jin-Xin systems is analyzed.

The bibliography is so rich that it would be impossible to mention everyone.  
\vskip0.2cm
Roughly speaking, the phenomenon of metastability can be summarized as follows: starting from an initial datum $u_0(x)$ which contains $N$ zeroes inside the interval $I$, a layered solution with exactly $N$ layers is formed in an $\mathcal O(1)$ time scale; once this pattern is formed, it starts to move towards its asymptotic stable configuration, but this motion can be extremely slow as the viscosity parameter $\e$ goes to zero. As a consequence, we can distinguish two different time phases in the dynamics; a first transient phase of order one where the internal shock layers are formed, and a subsequent  long time phase where the layers interact until the solution stabilizes to the stable steady state of the system.
\vskip0.2cm
Usually, such behavior is related to the presence of a first small (with respect to $\e$) eigenvalue of the linearized operator around  the steady state (see, for example, \cite{KreiKrei86}). As it is well known, if $\lambda_1^\e$ is {\bf negative}, the steady is stable; if, in addition, $\lambda^\e_1 \to 0$ as $\e \to 0$, the steady state is metastable in the sense that the time dependent solution converges towards it in a time scale that goes to $\infty$ as $\e$ goes to zero. On the contrary, if $\lambda_1^\e$ is {\bf positive}, the steady state in unstable and we will see the solution to "run away" towards a stable configuration; again, this motion will be extremely slow as $\e\to0$. 

\vskip0.2cm 
The aim of this paper is to describe the metastable behavior of solutions to the general class of quasilinear parabolic equation described in \eqref{burgers}.

In the limit $\e \to 0$, equation \eqref{burgers} reduces to the first order hyperbolic equation
\begin{equation}\label{epszero}
\partial_t u = -G(u,\partial_x u),
\end{equation}
complemented with initial datum $u_0(x)$ and appropriate boundary conditions.  As it is well known, the set of solutions to \eqref{epszero} is the one  given by the entropy formulation, in the sense of Kruzkov (see \cite{Kru70}); moreover, the boundary conditions has to be interpreted in a nonclassical way in the sense of \cite{BardLeRoNede79}.

In the case $a(x)\equiv1$, for some special choices of the nonlinear forcing term $G$, it is possible to prove the existence of discontinuous stationary solutions for the inviscid problem \eqref{epszero}, corresponding to stationary solutions with internal layers for the associated viscous problem (see, for example, \cite{MascTerr99} in the case of a  reaction-convection equation); as already stressed before, the corresponding time dependent solutions exhibit a metastable behavior.
\vskip0.2cm

There are a large number of works that have investigated such phenomenon for problem \eqref{burgers} with $a(x)\equiv1$;  for instance, in \cite{MS, Str13,Str13bis}, the authors describe this behavior for different parabolic equations, throughout the slow motion of the internal interfaces of the solutions. 

Motivated by this, we expect that also in the more general setting  $a(x)$ satisfying  \eqref{ipoa},  all the discontinuities that appear at the hyperbolic level $\e=0$ shall eventually  turn out into smooth internal layers, and that a metastable behavior will be observable in the vanishing viscosity limit.

\vskip0.2cm
In order to characterize the slow dynamics of solutions to \eqref{burgers}, we mean to adapt the theory developed in \cite{MS} for parabolic equations to our more general setting;  this strategy dates back the work of J. Carr and R. L. Pego \cite{CarrPego89} and can be summarized as follows.
 
The principal idea is to construct a family $\{ U^\e(x,\xi_i) \}_{\xi_i \in I}$, $i=1,....., N$, of so called {\it approximate} steady states for the problem, and to linearize the original equation around an element of this family. With {\it approximate} steady state for \eqref{burgers}, we refer to a solution that solves the stationary equation up to an error that is small in a sense that will be specified later. The parameters $\xi_i$ represent the location of the interfaces.

The aim of this construction is to separate the two distinct phases of the dynamics. Firstly, we mean to understand what happens far from the steady state solution, when the interfaces are formed; subsequently, we want to follow the evolution of the layered solution towards the asymptotic limit. In particular, we describe such {\it slow} motion by obtaining an equation for the positions of the interfaces $\xi_i(t)$, $i=1,....., N$. For the sake of simplicity, in this paper we restrict our analysis to the case of a solution with a single shock layer located in $\xi \in I$, the general case being similar (see, for example, \cite{CarrPego89}). 

After the family $\{ U^\e \}$ is given, a study of the eigenvalue problem associated to the linearized operator $\mathcal L^\e$ obtained from the linearization is needed. Indeed, we want to describe the dynamics of solutions  located far from the equilibrium configuration of the system and, by performing a spectral analysis of $\mathcal L^\e$, we are able to show that the speed rate of convergence of the solutions towards the asymptotic configuration is small in $\e$.
\vskip0.2cm
We close this Introduction with an overview of the paper. In Section 2 we present the general strategy we develope in order to describe the long time behavior of solutions belonging to a neighborhood of a family of approximate steady states $\{ U^\e(x;\xi)\}_{\xi \in I}$. By linearizing the original equation \eqref{burgers} around an element of the family, i.e. by looking for a solution $u$ on the form $u=U^\e+v$, and by using an adapted version of the projection method in order to remove the growing components of the perturbation $v$, we obtain a coupled system for the variables $(\xi,v)$, whose analysis is performed in the subsequent section. In particular, in Section 3, we state and prove Theorems \ref{T1}, \ref{T1bis}, \ref{T2} and \ref{T3}, providing different estimates for the perturbation $v$, depending on the choice of the nonlinear term $G(u,\partial_x u)$ and on the sign of the first eigenvalue of the linearized operator obtained from the linearization. Specifically, since we are taking into account also the nonlinear terms in $v$ (arising from the linearization), we will show that both the form of $G$ and the sign of $\lambda_1^\e$ influence the speed rate of convergence to zero of the perturbation. The estimates on $v$ will be used to decouple the nonlinear system for the variables $(\xi;v)$: we end up with a one-dimensional equation of motion for the variable $\xi$, whose analysis is addressed  in Proposition \ref{SMshock}.  In particular, the metastable behavior of the solution is  described through  the convergence of the interface location towards its equilibrium configuration. Again, the speed rate of this motion is influenced by the explicit form of $G$ and by the sign of $\lambda_1^\e$.\\
Precisely, our results can be summarized in the following Theorem.

\begin{theorem}
Let $u(x,t)= U^{\varepsilon}(x;\xi(t))+ v(x,t)$ be the solution of the initial-boundary value problem \eqref{burgers} and  let $\xi^* \in I$ such that $U^\e(x;\xi^*)$ is an exact steady state for \eqref{burgers}.  Then, for $\varepsilon$
  sufficiently small, there exists a time $T^{\varepsilon}$, diverging to $+\infty$ for $\e \to 0$, such that, for $t \leq T^\varepsilon$, the perturbation $v$ is converging to zero with a speed rate depending on $\e$, and  the interface location $\xi(t)$ satisfies the estimate
\begin{equation*}
|\xi(t) - \xi^* | \leq |\xi_0| e^{-\beta^\varepsilon t} \ \ {\rm with}\ \ \beta^\varepsilon >0 \ \ {\rm and }\ \ \beta^\varepsilon \to 0 \ {\rm as} \ \varepsilon \to 0.
\end{equation*} 
\end{theorem}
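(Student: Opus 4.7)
The plan is to follow the Carr--Pego ansatz outlined in the introduction: substitute $u=U^\varepsilon(x;\xi(t))+v(x,t)$ into \eqref{burgers} and deduce a coupled system consisting of an evolution equation for the perturbation $v$ and an ODE for the interface $\xi(t)$. Differentiating the ansatz in $t$ and plugging into \eqref{burgers} yields, schematically,
\begin{equation*}
\partial_t v + \partial_\xi U^\varepsilon\,\dot\xi = \mathcal L^\varepsilon v + \mathcal E^\varepsilon(x;\xi) + \mathcal N^\varepsilon(v,\xi),
\end{equation*}
where $\mathcal L^\varepsilon$ is the linearization of the right-hand side of \eqref{burgers} at $U^\varepsilon(\cdot;\xi)$, $\mathcal E^\varepsilon$ is the (small) residual measuring how far $U^\varepsilon(\cdot;\xi)$ is from being an exact steady state, and $\mathcal N^\varepsilon(v,\xi)$ collects all nonlinear contributions in $v$ coming from expanding $G(u,\partial_x u)$.

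The first step is the spectral input. One shows that $\mathcal L^\varepsilon$ has a simple first eigenvalue $\lambda_1^\varepsilon$ with $|\lambda_1^\varepsilon|\to 0$ as $\varepsilon\to 0$, and that the remainder of the spectrum is separated from it by a gap of order one, with an associated normalized eigenfunction $\psi_1^\varepsilon$ together with a left eigenfunction $\phi_1^\varepsilon$ dual to $\partial_\xi U^\varepsilon$. One then projects the equation onto the span of $\psi_1^\varepsilon$ and onto its spectral complement; the compatibility condition that $v$ stays orthogonal to $\phi_1^\varepsilon$ in the chosen pairing is exactly the scalar equation that determines $\dot\xi$. After this projection step one obtains an implicit scalar ODE of the form
\begin{equation*}
\dot\xi = \theta^\varepsilon(\xi) + \langle \phi_1^\varepsilon, \mathcal N^\varepsilon(v,\xi) + \mathcal E^\varepsilon(\cdot;\xi)\rangle \cdot [1+o(1)],
\end{equation*}
where $\theta^\varepsilon(\xi)$ is the reduced vector field governing the slow motion, coupled with a parabolic equation for $v$ posed on the stable spectral subspace of $\mathcal L^\varepsilon$.

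Next I would control $v$ on the stable subspace by combining the spectral gap of $\mathcal L^\varepsilon$ restricted there with a Duhamel/energy argument, using the estimates for $v$ already established in Theorems \ref{T1}--\ref{T3}; these give $\|v(t)\|\le C(\varepsilon)$ with a decay rate and size quantitatively depending on the form of $G$ and the sign of $\lambda_1^\varepsilon$. Feeding these bounds back into the scalar ODE and invoking Proposition \ref{SMshock} for the reduced dynamics $\dot\xi=\theta^\varepsilon(\xi)$, I would show that the fixed point $\xi^*$ is (at least locally) attractive with linearization $(\theta^\varepsilon)'(\xi^*)=-\beta^\varepsilon$, and that $\beta^\varepsilon\to 0$ at a rate governed by $\lambda_1^\varepsilon$. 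A standard Gronwall argument on the difference $\xi(t)-\xi^*$, together with continuation as long as $v$ stays inside the neighborhood in which the linearization and projection are valid, then yields the desired exponential estimate $|\xi(t)-\xi^*|\le|\xi_0|e^{-\beta^\varepsilon t}$ up to a time $T^\varepsilon=\mathcal O(1/\beta^\varepsilon)$, which diverges as $\varepsilon\to 0$.

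The main obstacle I anticipate is the interplay between the slowness of the drift $\theta^\varepsilon$ (of order $\beta^\varepsilon\to 0$) and the nonlinear remainder $\mathcal N^\varepsilon(v,\xi)$: if $v$ decays only at an $\varepsilon$-dependent rate, then on the long time scale $1/\beta^\varepsilon$ the accumulated nonlinear contribution to $\dot\xi$ could, in principle, overwhelm the linear part. Closing this competition requires a careful bootstrap that uses simultaneously (i) the quantitative smallness of $\mathcal E^\varepsilon$ for $U^\varepsilon$ an approximate steady state, (ii) the $\varepsilon$-uniform bounds on $\|v\|$ from Theorems \ref{T1}--\ref{T3}, and (iii) the sign information on $\lambda_1^\varepsilon$ distinguishing the stable and unstable regimes. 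Handling this coupled bootstrap, rather than the algebra of the projection itself, is where the real work lies.
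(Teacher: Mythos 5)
Your proposal follows exactly the route the paper takes: the Carr--Pego ansatz $u=U^\varepsilon(\cdot;\xi(t))+v$, the projection method enforcing $\langle \psi_1^\varepsilon, v\rangle \equiv 0$ to derive the coupled $(\xi,v)$ system, the decay estimates for $v$ from Theorems \ref{T1}--\ref{T3} (split by the sign of $\lambda_1^\varepsilon$ and whether $\mathcal Q^\varepsilon$ involves $\partial_x v$), and the subsequent reduction to the scalar ODE for $\xi$ handled in Proposition \ref{SMshock} by linearizing $\theta^\varepsilon$ about $\xi^*$. The obstacle you flag at the end --- the competition on the time scale $1/\beta^\varepsilon$ between the slow drift $\theta^\varepsilon$ and the accumulated nonlinear remainder --- is precisely what the paper resolves by establishing, for each case, a quadratic inequality $N(t)\le A\,N^2(t)+B$ on a weighted norm of $v$ and then restricting to $t\le T^\varepsilon$ so that $4AB\le1$, which is what pins down the divergence rate of $T^\varepsilon$.
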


As a consequence, the interface location is converging towards its  equilibrium configuration $\xi^*$ exponentially in time but, since $\beta^\e$ is small in $\e$, this convergence is extremely slow as $\e\to0$. In particular, the solution $u$ remains close to some non equilibrium configuration  for a time $T^\e$ that can be very long when $\e$ is small, before converging towards the steady state of the system, corresponding to $U^\e(x;\xi^*)$.

This result characterizing the couple $(\xi,v)$ gives a good  qualitative explanation of the transition from the metastable state to the finale stable state. Also, since we are analyzing a complete system for the couple $(\xi, v)$, the theory is more complete with respect to previous papers concerning metastability for parabolic problems (see, for example, \cite{MS,ReynWard95,SunWard99} where only an approximation of the system is taken into account).

Finally, in Section 4, we study, as an example, the case of a quasilinear viscous scalar conservation law: in this  case we give an explicit expression for the approximated family $\{ U^\e\}$, that can be used to provide an  asymptotic expression for the speed of convergence of the interface, showing that it is proportional to $e^{-1/\e}$.  Subsequently we analyze spectral properties of the linear operator arising from the linearization around the approximate steady state $U^\e$, proving that the first eigenvalue is negative and exponentially small in $\e$ (precisely of order $e^{-1/\e}$), while the rest of the spectrum is bounded away from zero. This analysis is needed to give evidence of the validity of the assumptions of Theorems \ref{T1}, \ref{T1bis}, \ref{T2} and \ref{T3}, at least in one concrete situation.

\vskip0.2cm

The main difference with respect to previous papers describing metastability for equations of the form \eqref{burgers}, and in particular with the work \cite{MS}, is that here we are considering a larger class of equations, where the form of the forcing term is not explicitly given and the diffusion is quasi-linear. The study of such class of equations could be a first step to address the problem of metastability for nonlinear-diffusion problems, such as the cases of the p-laplacian or  the fractional laplacian diffusion operator. 

Moreover, as already stressed, in this paper we describe the behavior of the complete system for the couple $(\xi,v)$, where also the nonlinear terms arising from the linearization are taken ion account. Since the forcing term $G$ may even depend on the space derivative of the solution, we need an estimate also for the $H^1$- norm of the perturbation $v$. This gives a more clear overview with respect to \cite{MS}, since the complete system better suites the behavior of the solutions to \eqref{burgers}. 

\section{General framework and linearization}

Let us define the nonlinear differential operator 
$$\mathcal P^\e[u]:=\varepsilon\,\partial_x(a(x) \partial_x u) - G(u,\partial_x u),$$ 
that depends singularly on the parameter $\e$, meaning that $\mathcal P^0[u]$ is of lower order. In particular, the evolutive equation \eqref{burgers} can be rewritten as
\begin{equation}\label{mainproblem}
\partial_t u = \mathcal P^\e[u], \quad u\big|_{t=0}=u_0.
\end{equation}
Let us suppose that there exists at least one solution to $\mathcal P^\e[u]=0$, i.e. there exists a steady state for the problem \eqref{mainproblem}, called here $\bar U^\e(x)$. Our primarily assumption is the following: we suppose that there exists a one-parameter family of functions $\{ U^\varepsilon(x;\xi) \}_{\xi \in I}$ such that
\begin{equation*}
|\langle \psi(\cdot ), \mathcal P^\e[U^\e(\cdot;\xi)]\rangle| \leq \Omega^\e(\xi)|\psi|_{L^\infty}, \quad \forall \, \psi \in C(I), \, \forall \, \xi \in I.
\end{equation*}
being $\Omega^\e(\xi)$ a family of smooth positive functions that converge to zero as $\e \to 0$, uniformly with respect to $\xi$. Moreover, we require that there exists a value $\xi^* \in I$ such that $U^\varepsilon(x;\xi^*)$ is the exact steady state of the problem.

\vskip0.2cm
The family $\{ U^\varepsilon(x;\xi) \}_{\xi \in I}$ can be seen as a {\bf family of approximate steady states} for \eqref{mainproblem}, in the sense that each element satisfies the stationary
equation up to an error that is small in $\e$, and that is measured by $\Omega^\e$. In particular, the parameter $\xi$ describes the unique zero of $U^\e$, corresponding to the location of the interface; if we suppose such parameter to depend  on time, then the evolution of $\xi(t)$ describes the evolution of the solution to \eqref{mainproblem} towards its equilibrium configuration.

\vskip0.2cm

Once the one-parameter family $\{ U^\varepsilon(x;\xi) \}_{\xi \in I}$ is chosen, we look for a solution to \eqref{mainproblem} in the form
\begin{equation}\label{formau}
u(x,t)= U^\e(x;\xi(t))+v(x,t),
\end{equation}
where  the perturbation $v \in C^0(\R^+;H^1(I))$ is determined by the difference between the solution $u$ and an element of the family of approximate steady states. By substituting \eqref{formau} into \eqref{mainproblem}, we obtain
\begin{equation}\label{eqvNL}
\partial_t v = \mathcal L^\e_{\xi(t)} v + \mathcal 	P^\e[U^\e(\cdot;\xi)] -\partial_\xi U^\e(\cdot;\xi) \, \frac{d\xi}{dt}+ \mathcal Q^\e[v,\xi],
\end{equation}
where
\begin{equation*}
	\begin{aligned}
		\mathcal{L}^\varepsilon_\xi v:=d{\mathcal P}^\varepsilon[U^{\varepsilon}(\cdot;\xi)]\,v
	\end{aligned} 
\end{equation*}
is the linearized operator arising from the linearization around $U^\e$, while $\mathcal Q^\e[v,\xi]$ collects  the quadratic  terms in $v$ arising from the linearization and it is defined as
\begin{equation*}
\mathcal{Q}^\varepsilon[v,\xi]:={\mathcal P}^\varepsilon[U^{\varepsilon}(\cdot;\xi)+v]
			-{\mathcal P}^\varepsilon[U^{\varepsilon}(\cdot;\xi)]
				-d{\mathcal P}^\varepsilon[U^{\varepsilon}(\cdot;\xi)]\,v.
\end{equation*}
\begin{example}{\rm
Let us consider the case of a quasilinear scalar conservation law, i.e. problem \eqref{burgers} with $G(u,\partial_x u)=\partial_x f(u)$.  We have 
\begin{equation*}
\begin{aligned}
\mathcal L^{\e,f}_{\xi} v := \e \partial_x \left( a(x) \partial_x v\right)-\partial_x \left(f'(U^\e) \, v \right), \qquad 
\mathcal Q^{\e,f}[v,\xi]:= -\frac{1}{2}   \partial_x \left( f''(U^\e) v^2  \right).
\end{aligned}
\end{equation*}
On the contrary, when considering problem \eqref{burgers} with $G(u,\partial_x u)=g(u)$, since the forcing term $G$ depends only on $u$, we obtain
\begin{equation*}
\begin{aligned}
\mathcal L^{\e,g}_{\xi} v := \e \partial_x \left( a(x) \partial_x v\right)- g'(U^\e) \, v , \qquad 
\mathcal Q^{\e,g}[v,\xi]:= -\frac{1}{2}   g''(U^\e) v^2 .
\end{aligned}
\end{equation*}
In particular one has
\begin{equation*}
\mathcal |Q^{\e,f}[v,\xi]|_{{}_{L^1}} \leq C \,  |v|^2_{{}_{H^1}}, \qquad \mathcal |Q^{\e,g}[v,\xi]|_{{}_{L^1}} \leq \ C\, |v|^2_{{}_{L^2}}.
\end{equation*}
The form of the nonlinear terms in $v$ will play a crucial role in the asymptotic behavior of the solution, as we will see in details later on in the is paper; in particular, it effects the speed rate of convergence of the solutions towards the asymptotic limit.
}
\end{example}

\subsection{Spectral hypotheses and the projection method}
We begin by analyzing the spectrum of the linearized operator $\mathcal L^\e_\xi$; we assume such spectrum  to be composed of a decreasing sequence $\{ \l^\e_k(\xi)\}_{k \in \N}$ of {real} eigenvalues such that
\begin{itemize}
\item  $\l^\e_1(\xi) \to 0$ as $\e \to 0$, uniformly with respect to $\xi$.
\item For all $k\geq 2$, $\l^\e_k(\xi)$ are {\bf negative} and there holds
\begin{equation*}
\lambda^\e_1(\xi)- \lambda_2^\e(\xi) \geq C' \ \ \ \ \ \forall \ \xi \in I, 
\end{equation*}
\end{itemize}
Hence, we assume that there is a spectral gap between the first and the second eigenvalue and we ask for $\l_1^\e$ to be small in $\e$, uniformly with respect to $\xi$. 

\begin{remark}{\rm
We note that there are no requests on the sign of the first eigenvalue $\l^\e_1$. Indeed, the metastable behavior is a consequence only of the smallness, with respect to $\e$, of the absolute value of such first eigenvalue (see, for example, \cite{Str13bis}). 
}
\end{remark}

Denoting by  $\varphi^\varepsilon_k=\varphi^\varepsilon_k(\cdot;\xi)$ the  right eigenfunctions of $\mathcal L^\e_\xi$ and by $\psi^\varepsilon_k=\psi^\varepsilon_k(\cdot;\xi)$ the eigenfunctions of the corresponding  adjoint operator  $\mathcal{L}^{\varepsilon,\ast}_\xi$, we set 
\begin{equation*}
	v_k=v_k(\xi;t):=\langle \psi^\varepsilon_k(\cdot;\xi),v(\cdot,t)\rangle. 
\end{equation*}
We now use an adapted version of the {\rm projection method} in order to obtain an equation of motion for the parameter $\xi$. Since we have supposed the first eigenvalue of the linearized operator to be small in $\e$, in order to remove the singular part of the operator $\mathcal L^\e_\xi$ in the limit $\e \to 0$, we  impose $v_1 \equiv 0$. Hence,  the equation for the parameter $\xi(t)$ is chosen in such a way that the unique growing terms in the perturbation $v$ are canceled out.  In formulas
\begin{equation*}
\frac{d}{dt} \langle \psi^\varepsilon_1(\cdot;\xi(t)), v(\cdot,t) \rangle =0
	\qquad\textrm{and}\qquad
	\langle \psi^\varepsilon_1(\cdot;\xi_0), v_0(\cdot)\rangle=0.
\end{equation*}
Using equation \eqref{eqvNL}, we have
\begin{equation*}
	\langle \psi^\varepsilon_1(\xi,\cdot),\mathcal{L}^\varepsilon_\xi v+ {\mathcal P}^\e[U^{\varepsilon}(\cdot;\xi)]
		- \partial_{\xi}U^{\varepsilon}(\cdot;\xi)\frac{d\xi}{dt}+{\mathcal Q}^\varepsilon[v,\xi] \rangle 
		+ \langle  \partial_{\xi}\psi^\varepsilon_1(\xi,\cdot) \frac{d\xi}{dt},v \rangle =0.
\end{equation*}
Since, for small $\varepsilon$, $\langle \psi^\varepsilon_1, {\mathcal L}^\e_{\xi}v \rangle= \lambda^\e_1\langle \psi^\varepsilon_1, v \rangle=0$, 
we obtain a scalar nonlinear differential equation for the variable $\xi$,  that is
\begin{equation}\label{eqxi0}
		\frac{d\xi}{dt}=\frac{\langle \psi^\varepsilon_1(\cdot;\xi),
			{\mathcal P}^\e[U^{\varepsilon}(\cdot;\xi)]+\mathcal{Q}^\varepsilon[v,\xi] \rangle}{ \langle \psi^\varepsilon_1(\cdot;\xi), \partial_{\xi}U^{\varepsilon}(\cdot;\xi) \rangle - \langle  \partial_{\xi}\psi^\varepsilon_1(\cdot;\xi),v \rangle }.
\end{equation}
We notice that if $U^\varepsilon(\cdot;\xi^*)$ is the exact stationary solution, then
\begin{equation*}
	\mathcal{P}^\varepsilon[U^\varepsilon(\cdot;\xi)]
	=\mathcal{P}^\varepsilon[U^\varepsilon(\cdot;\xi)]-\mathcal{P}^\varepsilon[U^\varepsilon(\cdot;\xi^*)]
	\approx\mathcal{L}_\xi^\varepsilon\partial_\xi U^\varepsilon(\cdot;\xi^*)(\xi-\xi^*).
\end{equation*}
Hence, at least for small $\e$,  the first eigenfunction $\psi^\e_1$ is not transversal to $\partial_\xi U^\varepsilon$ and we can take advantage of the renormalization
\begin{equation*}
\langle \psi^\varepsilon_1(\cdot;\xi), \partial_{\xi}U^{\varepsilon}(\cdot;\xi) \rangle  =1, \quad \forall \ \xi \in I.
\end{equation*}
Since we consider  a small perturbation, in the regime $v \to 0$ we have
\begin{equation*}
	\frac{1}{1-\langle  \partial_{\xi}\psi^\varepsilon_1(\cdot;\xi),v \rangle} 
		= 1+\langle \partial_{\xi} \psi^\varepsilon_1(\cdot;\xi),v \rangle+ R[v],
\end{equation*}
where the remainder $R$ is of order $o(|v|)$, and it is defined as
\begin{equation*}
R[v]:= \frac{\langle  \partial_{\xi}\psi^\varepsilon_1(\cdot;\xi),v \rangle^2}{1-\langle  \partial_{\xi}\psi^\varepsilon_1(\cdot;\xi),v \rangle}.
\end{equation*}
Inserting in \eqref{eqxi0}, we end up with the following nonlinear equation for $\xi$
\begin{equation}\label{eqxiNL}
	\frac{d\xi}{dt}=\theta^\varepsilon(\xi)\bigl(1+\langle\partial_{\xi} \psi^\varepsilon_1, v \rangle\bigr)
		+ \rho^\varepsilon[\xi,v], 
\end{equation}
where
\begin{equation*}
	\begin{aligned}
 	\theta^\varepsilon(\xi)
		&:=\langle \psi^\varepsilon_1,{\mathcal P^\e[U^{\varepsilon}] \rangle},\\
	\rho^\varepsilon[\xi,v]&:=  \langle \psi^\e_1, \mathcal Q^\e[v,\xi] \rangle \bigl(1+\langle\partial_{\xi} \psi^\varepsilon_1, v \rangle\bigr)+ \langle \psi^\e_1, \mathcal P^\e[U^\e]+ \mathcal Q^\e[v,\xi] \rangle R[v].
	\end{aligned}
\end{equation*}
Moreover, plugging \eqref{eqxiNL} into \eqref{eqvNL}, we obtain a partial differential equation for the perturbation $v$
\begin{equation*}
\partial_t v= H^\varepsilon(x;\xi)
		+ ({\mathcal L}^\varepsilon_\xi+{\mathcal M}^\varepsilon_\xi)v
			+\mathcal{R}^\varepsilon[v,\xi],
\end{equation*}
where 
\begin{align*}
		H^\varepsilon(\cdot;\xi)&:={\mathcal P}^\varepsilon[U^{\varepsilon}(\cdot;\xi)]
			-\partial_{\xi}U^{\varepsilon}(\cdot;\xi)\,\theta^\varepsilon(\xi),\\
		{\mathcal M}^\varepsilon_\xi v&:=-\partial_{\xi}U^{\varepsilon}(\cdot;\xi)
			\,\theta^\varepsilon(\xi)\,\langle\partial_{\xi} \psi^\varepsilon_1, v \rangle,\\ 
		\mathcal{R}^\varepsilon[v,\xi]&:=\mathcal{Q}^\varepsilon[v,\xi]
								-\partial_{\xi}U^{\varepsilon}(\cdot;\xi)\,\rho^\varepsilon[\xi,v].
\end{align*}

\section{The metastable dynamics}

The couple $(v,\xi)$ solves the system 
\begin{equation}\label{NL}
 	\left\{\begin{aligned}
	\frac{d\xi}{dt}&=\theta^\varepsilon(\xi)\bigl(1 
		+\langle\partial_{\xi} \psi^\varepsilon_1, v \rangle\bigr) + \rho^\varepsilon[\xi,v] , \\
	\partial_t v &= H^\varepsilon(\xi)+ ({\mathcal L}^\varepsilon_\xi+{\mathcal M}^\varepsilon_\xi)v+\mathcal{R}^\varepsilon[v,\xi],
 	\end{aligned}\right. 
\end{equation}
with initial conditions given by
\begin{equation*}
	\langle \psi^\varepsilon_1(\cdot;\xi_0), u_0-U(\cdot;\xi_0)\rangle =0, \qquad v_0=u_0-U(\cdot;\xi_0).
\end{equation*}
Our aim is to describe the behavior of the solution to \eqref{NL} in the regime
of small $\varepsilon$. 

As stated in the introduction, the asymptotic behavior of the solution and, in particular, the speed rate of convergence of the shock layer towards the equilibrium configuration, is strictly related to the specific form of the nonlinear terms arising from the linearization of the original problem around the family $U^\e$. To be more precise, for a certain class of parabolic equations (as, for example,  viscous conservation laws), these quadratic terms involve a dependence on the space derivative of the solution, so that an additional bound for the $L^2$-norm of the space derivative of the solution is needed. On the contrary, when considering equations where the forcing term depends only on the solution itself (as, for instance, equation of reaction-diffusion type), we need to establish an upper bound only for the $L^2$-norm of $v$. 

Furthermore, an important role is played by the first eigenvalue of the linearized operator; indeed, heuristically, the large time  behavior of solutions is described by terms of order $e^{\lambda_1^\varepsilon\,t}$. In particular, the sign of $\lambda^\e_1$ characterizes the stability properties of the steady state around which we are linearizing. When such eigenvalue is {\bf negative}, the steady state is stable and the solution is metastable in the sense that,  starting from an initial configuration located {\bf far} from the equilibrium, the time-dependent solution starts to drifts in an exponentially long time towards the asymptotic limit. On the other side, when $\lambda^\e_1$ is {\bf positive}, the solutions is said to be metastable because, starting from an initial datum located {\bf near} the unstable steady state, the solution drifts apart  towards one of the stable equilibrium configurations of the system, and this motion is extremely slow.
\vskip0.3cm
Hence, we need to distinguish different situations, depending on the type of equation we are dealing with; precisely, we will obtain different estimates for the perturbation $v$ and for the speed rate of convergence of the shock layer (dictated by the behavior of $\xi(t)$), solutions to \eqref{NL}, depending on the sign of $\lambda^\e_1$ and on the form of the nonlinear term $\mathcal Q^\e$. Before state our results, let us recall the hypotheses we need.

\vskip0.2cm
{\bf H1.} The family $\{U^{\varepsilon}(\cdot,\xi)\}$ is such that there exist smooth functions $\Omega^\varepsilon(\xi)$ such that
\begin{equation}\label{defomegaeps}
	|\langle \psi(\cdot),{\mathcal P}^\varepsilon[U^{\varepsilon}(\cdot,\xi)]\rangle|
		\leq |\Omega^\varepsilon(\xi)|\,|\psi|_{{}_{\infty}} \qquad \forall\,\psi\in C(I),
\end{equation}
with $\Omega^\varepsilon$ converging to zero as $\varepsilon\to 0$, uniformly with respect to $\xi\in I$. Moreover, we require that there exists a value $ \xi^* \in I$ such that the element $U^\e(x; \xi^*)$ corresponds to an exact steady state for the original equation.
\vskip0.2cm
{\bf H2.} The eigenvalues $\{\lambda^\varepsilon_k(\xi)\}_{{}_{k\in\N}}$ of the linearized operator $\mathcal{L}^\varepsilon_\xi$ are real and such that 
\begin{equation*}
	\lim_{\varepsilon \to 0}\lambda^\varepsilon_1(\xi) =0, \quad \lambda^\varepsilon_1(\xi)- \lambda^\varepsilon_k(\xi)>c_1 \quad  {\rm and } \quad \lambda_k^\e(\xi) \leq  -\frac{c_2}{\e^{\alpha}}
	\quad \textrm{for }k\geq 2.
\end{equation*}
for some constants $c_1, c_2>0$ independent on $k\in\N$, $\varepsilon>0$ and $\xi\in I$, and for some $\alpha \geq 0$.
\vskip0.2cm
{\bf H3.} There exists a constant $C>0$ such that
\begin{equation*}
|\Omega^\varepsilon(\xi)| \leq C |\lambda^\varepsilon_1(\xi)|, \quad \forall \ \xi  \in I.
\end{equation*}

\subsection{{The case $\lambda^\e_1<0$ and the quadratic term $\mathcal Q^\e$ depending only on $v$}}

At first we consider the case of a nonlinearity $\mathcal Q^\e$ that only depends on the perturbation $v$, and not on its space derivatives; we show that, if we consider a perturbation $v$ such that $v(0,x)$ is bounded, than we can perform an $L^\infty$ estimate for the solution. In order to state an prove our result, we need an additional hypothesis.

\hskip1cm

\vskip0.2cm
{\bf H4.} Concerning the solution $z$ to the linear problem $\partial_t z=\mathcal L^\varepsilon_\xi z$, we require that there exists $\nu^\varepsilon>0$ such that, for all $\xi \in I$, there exist a constant $\bar C$ such that
\begin{equation*}
|z(t)|_{{}_{L^2}} \leq \bar C |z_0|_{{}_{L^2}}e^{-\nu^\varepsilon t}, \quad  \forall \xi \in I
\end{equation*}
\begin{remark}\rm{
The constant $\bar C$ could depend on $\xi$. In this specific case, since $\xi$ belongs to a bounded interval of the real line, if we suppose that $\xi \mapsto C_{\xi(t)}$ is a continuous function, then there exists a maximum of $C_\xi$  in $I$, namely $\bar C$.
}
\end{remark}

\vskip0.5cm

\begin{theorem}\label{T1}
Let hypotheses {\bf H1-2-3-4} be satisfied and let $|v_0|_{{}_{L^\infty}} < +\infty$. Then, for $\varepsilon$ sufficiently small, there exists a time $T^\e$ diverging to $+\infty$ as $\e \to 0$, such that, for all $t \leq T^\e$ the solution $v$ to \eqref{NL} satisfies
\begin{equation*}
|v|_{{}_{L^\infty}}(t) \leq  C|\Omega^\varepsilon|_{{}_{L^\infty}} t+e^{-\mu^\varepsilon t}|v_0|_{{}_{L^\infty}},  \end{equation*}
for some positive constant $C$ and
\begin{equation*}
\mu^\varepsilon :=\sup_{\xi}\{\lambda^\varepsilon_1(\xi)\}- C|\Omega^\varepsilon|_{{}_{L^\infty}}>0.
\end{equation*}
\end{theorem}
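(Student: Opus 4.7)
The plan is to recast the $v$-equation in \eqref{NL} as an inhomogeneous non-autonomous linear evolution, apply a Duhamel representation with the evolution operator $S^\varepsilon(t,s)$ generated by $\mathcal{L}^\varepsilon_{\xi(\cdot)}$, estimate each forcing term in $L^\infty$, and close by a continuation/bootstrap argument running up to the maximal time $T^\varepsilon$ at which $v$ still lies in the small-perturbation regime.

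First I would bound the three forcing contributions in $L^\infty$. Using hypothesis \textbf{H1} together with the renormalization $\langle \psi_1^\varepsilon, \partial_\xi U^\varepsilon\rangle = 1$, one obtains $|\theta^\varepsilon(\xi)| \leq C|\Omega^\varepsilon|_{L^\infty}$, hence $|H^\varepsilon(\cdot;\xi)|_{L^\infty} \leq C|\Omega^\varepsilon|_{L^\infty}$ uniformly in $\xi\in I$ and $|\mathcal{M}^\varepsilon_\xi v|_{L^\infty} \leq C|\Omega^\varepsilon|_{L^\infty}|v|_{L^\infty}$. Crucially, in the present regime $\mathcal{Q}^\varepsilon[v,\xi]$ depends only on $v$ (not on $\partial_x v$), so $|\mathcal{R}^\varepsilon[v,\xi]|_{L^\infty} \leq C|v|_{L^\infty}^2$; this is precisely what makes a pure $L^\infty$ estimate feasible without any auxiliary $H^1$-control on $v$.

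Second I would upgrade the $L^2$-decay provided by \textbf{H4} to an $L^\infty$-bound for the evolution $S^\varepsilon(t,s)$. Since in the scalar case $\mathcal{L}^{\varepsilon,g}_\xi = \varepsilon\partial_x(a\,\partial_x\cdot) - g'(U^\varepsilon)\,\cdot$ has a bounded zero-order coefficient and no transport term, a parabolic maximum-principle / short-time regularization argument yields a bound of the form $|S^\varepsilon(t,s) z|_{L^\infty} \leq C\,e^{-\mu_0^\varepsilon(t-s)}|z|_{L^\infty}$ with decay rate essentially $\mu_0^\varepsilon = -\sup_\xi \lambda_1^\varepsilon(\xi)$, uniform in $\xi$ thanks to the spectral gap in \textbf{H2}. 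Substituting into Duhamel's formula yields the integral inequality
\begin{equation*}
|v(t)|_{L^\infty} \leq e^{-\mu_0^\varepsilon t}|v_0|_{L^\infty} + C\int_0^t e^{-\mu_0^\varepsilon(t-s)}\bigl(|\Omega^\varepsilon|_{L^\infty} + |\Omega^\varepsilon|_{L^\infty}|v(s)|_{L^\infty} + |v(s)|_{L^\infty}^2\bigr)\,ds,
\end{equation*}
and a Gronwall-type argument absorbs the linear-in-$v$ term into the exponential rate (shifting $\mu_0^\varepsilon$ to the $\mu^\varepsilon$ of the statement) and bounds the constant forcing integral crudely by $C|\Omega^\varepsilon|_{L^\infty}t$, which is exactly the claimed form.

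Finally, the quadratic remainder is handled by a standard continuation: fix a small $\delta>0$, define $T^\varepsilon$ as the largest time for which $C|\Omega^\varepsilon|_{L^\infty} t + e^{-\mu^\varepsilon t}|v_0|_{L^\infty} \leq \delta$, and on $[0,T^\varepsilon]$ the quadratic contribution $|v|^2$ is dominated by $\delta|v|$ and can be reabsorbed into the linear term already accounted for. Since $|\Omega^\varepsilon|_{L^\infty}\to 0$ while $\mu^\varepsilon$ stays positive for small $\varepsilon$, one verifies $T^\varepsilon \to +\infty$ as $\varepsilon\to 0$. The hardest step is the semigroup bound: because $\xi(t)$ varies with time, $\mathcal{L}^\varepsilon_{\xi(t)}$ is genuinely non-autonomous, so the $L^\infty$-decay cannot be read off a single spectral picture and must be obtained either by a frozen-coefficient/perturbation argument exploiting the regularity of $\xi\mapsto\mathcal{L}^\varepsilon_\xi$, or by a direct energy/maximum-principle estimate on the evolution operator that is uniform over all admissible trajectories $\xi(\cdot)\subset I$.
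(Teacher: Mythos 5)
Your overall scaffolding -- Duhamel representation for the non\nobreakdash-autonomous linear part, $\Omega^\varepsilon$-size bounds on $H^\varepsilon$ and $\mathcal{M}^\varepsilon_\xi$, and the observation that $\mathcal{R}^\varepsilon$ is purely quadratic in $|v|_{L^\infty}$ because $\mathcal{Q}^\varepsilon$ has no $\partial_x v$ dependence -- matches the paper's proof, which runs exactly this way through Pazy's theory of stable families of generators (Definition~\ref{def1}, Theorems~\ref{thpaz1} and~\ref{thpaz3}). You also correctly flag that hypothesis~{\bf H4} provides only an $L^2$-decay while the claimed estimate is in $L^\infty$; the paper in fact moves from the $L^2$ operator bound straight to $|v|_{L^\infty}$ without addressing this mismatch, so your proposed maximum-principle/regularization patch is pointing at a genuine imprecision, not inventing a difficulty.

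There is, however, a gap in your bootstrap. You propose to run up to the first time $T^\varepsilon$ where $|v|\le\delta$ fails, dominate $|v|^2\le\delta|v|$ there, and ``reabsorb into the linear term already accounted for,'' i.e. shift the Gronwall exponent by $C\delta$. That cannot close here: the available decay rate is $\mu^\varepsilon_0=|\Lambda^\varepsilon_1|=\bigl|\sup_\xi\lambda^\varepsilon_1(\xi)\bigr|$, which by {\bf H2}--{\bf H3} and the spectral analysis of Section~4 is exponentially small in $\varepsilon$. For any \emph{fixed} $\delta>0$ one has $\mu^\varepsilon_0-C\delta<0$ as soon as $\varepsilon$ is small, and the Gronwall factor $e^{C\delta t}$ then grows and destroys the claimed estimate. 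The paper never absorbs the quadratic into the exponential rate: it sets $N(t)=|v|_{L^\infty}(t)$, obtains the algebraic inequality $N\le A N^2+B$ with $A=c_2 t$ and $B=e^{-\mu^\varepsilon t}|v_0|_{L^\infty}+c_1|\Omega^\varepsilon|_{L^\infty}t$, and concludes $N\le B$ (up to a factor of $2$) on the interval where $4AB\le 1$; the smallness needed for the quadratic term to be harmless is thus the $\varepsilon$-dependent quantity $A B$ rather than a fixed threshold $\delta$, and the condition $4AB\le 1$ is precisely what defines $T^\varepsilon\to+\infty$. You should replace the Gronwall-with-rate-shift step by this closed quadratic inequality, or by a continuation argument whose threshold shrinks together with $B(t)$ in $\varepsilon$ rather than being a fixed constant.
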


The proof of Theorem \ref{T1} we present here is based on the theory of {\it stable families of generators}, firstly developed by Pazy in \cite{Pazy83}; it is a generalization of the theory of semigroups for evolution systems of the form $\partial_t u= L u$, when the linear operator $L$ depends on time. We refer to Appendix A for the definitions of the tools we shall use in the following.

\begin{proof}[Proof of Theorem \ref{T1}]
First of all, let us notice that $\mathcal M^\varepsilon_\xi$ is a bounded operator that satisfies the estimate
\begin{equation}\label{asyMH}
\begin{aligned}
\|\mathcal M^\varepsilon_\xi \|_{\mathcal L(L^2;\R)} \leq c_1|\theta^\varepsilon(\xi)| \leq c_1 |\Omega^\varepsilon|_{{}_{L^\infty}}, \quad \forall \xi \in I.
\end{aligned}
\end{equation}
and $H^\varepsilon(\xi)$ is such that
\begin{equation}\label{asyMH2}
|H^\varepsilon|_{{}_{L^\infty}} \leq c_2| \Omega^\varepsilon|_{{}_{L^\infty}},
\end{equation}
for some positive constants $c_1$ and $c_2$.
Moreover, concerning the nonlinear terms $\rho^\e$ and $\mathcal R^\e$ and because of the specific form of $\mathcal Q^\e$, there follows
\begin{equation}\label{estnonlinearterms}
|\mathcal R^\e|_{{}_{L^\infty}}  \leq C |v|^2_{{}_{L^\infty}}.
\end{equation} 
Next, we want to show that $\mathcal L^\varepsilon_\xi+\mathcal M^\varepsilon_\xi$ is the infinitesimal generator of a $C_0$ semigroup $\mathcal T_\xi(t,s)$. To this aim, concerning the eigenvalues of the linear operator $\mathcal L^\varepsilon_\xi$, we know that $\lambda_1^\varepsilon(\xi)$ is negative and goes to zero as $\varepsilon \to 0$, for all $\xi \in I$. Hence, defining $\Lambda_1^\varepsilon:=\sup_\xi \lambda_1^\varepsilon(\xi)$, we have $\lambda_k^\varepsilon \leq -|\Lambda_1^\varepsilon| <0$ for all $k \geq 1$. Also, for $t \in [0,T]$, $\mathcal L^\varepsilon_{\xi(t)}$ is the infinitesimal generator of a $C_0$ semigroup $\mathcal S_{\xi(t)}(s)$, $s>0$ and,  since {\bf H4}  holds {with the choice $\nu^\e=|\Lambda_1^\varepsilon|$}, we get
\begin{equation*}
\| \mathcal S_{\xi(t)}(s)\| \leq \bar C e^{-|\Lambda_1^\varepsilon|s},
\end{equation*}
and this estimate is independent on $t$. {Thus}, by using Definition \ref{def1} and the following remark, we can state that the family $\{ \mathcal L^\varepsilon_{\xi(t)}\}_{\xi(t) \in J}$ is stable with stability constants $M=\bar C$ and $\omega=-|\Lambda_1^\varepsilon|$. Furthermore, since \eqref{asyMH} holds, Theorem \ref{thpaz1} states that the family $\{ \mathcal L^\varepsilon_{\xi(t)}+ \mathcal M^\varepsilon_{\xi(t)}\}_{\xi(t) \in J}$ is stable with $M=\bar C$ and $\omega= -|\Lambda_1^\varepsilon|+{\bar C}|\Omega^\varepsilon|_{{}_{L^\infty}} $. In particular, by choosing $\bar C=1/C$ , $\omega$ is negative since {\bf H3} holds.

Going further, in order to apply Theorem \ref{thpaz3}, we need to check that the domain of $\mathcal L^\varepsilon_\xi+\mathcal M^\varepsilon_\xi$ does not depend on time;  this is true since $\mathcal L^\varepsilon_\xi+\mathcal M^\varepsilon_\xi$ depends on time through the function $U^\varepsilon(x;\xi(t))$, that does not appear in the higher order terms of the operator. More precisely, the principal part of the operator does not depend on $\xi(t)$.

Hence, we can define $\mathcal T_\xi(t,s)$ as the {\it evolution system} of $\partial_t v=(\mathcal L^\varepsilon_\xi+ \mathcal M^\varepsilon_\xi)v$,
so that
\begin{equation}\label{Yfamiglieevol}
v(t)=\mathcal T_\xi(t,s)v_0+\int_s^t \mathcal T_\xi(t,r)H^\varepsilon(x;\xi(r)) dr+\int_s^t \mathcal T_\xi(t,r)\, \mathcal R^\varepsilon[\xi(r);v(r)] dr, \quad 0 \leq s \leq t 
\end{equation} 
Moreover, there holds
\begin{equation*}
\|\mathcal T_\xi(t,s)\| \leq \bar Ce^{-\mu^\varepsilon (t-s)}, \qquad \mu^\varepsilon := |\Lambda^\varepsilon_1|- {\bar C}|\Omega^\varepsilon|_{{}_{L^\infty}}>0.
\end{equation*}
Finally, from the representation formula \eqref{Yfamiglieevol} with $s=0$ and since \eqref{estnonlinearterms} holds, there follows 
\begin{equation*}
|v|_{{}_{L^\infty}}(t) \leq e^{-\mu^\varepsilon t}|v_0|_{{}_{L^\infty}}+ \sup_{\xi \in I}|H^\varepsilon|_{{}_{L^\infty}}(\xi) \int_0^te^{-\mu^\varepsilon(t-r)} \ dr+ |v|^2 _{{}_{L^\infty}} (t)\int_0^te^{-\mu^\varepsilon(t-r)} \ dr,\quad \forall \, t \geq 0,
\end{equation*}
so that, by using \eqref{asyMH2}, we end up with
\begin{equation*}
|v|_{{}_{L^\infty}}(t) \leq  e^{-\mu^\varepsilon t}|v_0|_{L^\infty}+c_1|\Omega^\varepsilon|_{{}_{L^\infty}}t + c_2 |v|^2 _{{}_{L^\infty}} \, t.
\end{equation*}
Hence, setting $N(t):=|v|_{{}_{L^\infty}}(t) $, we can rewrite the previous inequality as
\begin{equation*}
N(t) \leq A \, N^2(t) + B,
\end{equation*}
and we can conclude $N(t) \leq B$ providing  $4AB \leq 1$. This condition is a condition on the final time $t$ that reads
\begin{equation}\label{tempofinale}
{C_1}t \,  e^{-\mu^\e t} |v_0|^2_{{}_{L^\infty}} + {C_2} |\Omega^\e| \, t^2 \leq 1.
\end{equation}
Precisely, the function $g(t):= {C_1}t \,  e^{-\mu^\e t} |v_0|^2_{{}_{L^\infty}} + {C_2} |\Omega^\e| \, t^2$ behaves like $ |\Omega^\e| \, t^2$ for large $t$; since $|\Omega^\e| \to 0$ as $\e \to 0$, condition \eqref{tempofinale} is satisfied for all $t \leq T^\e$, where $T^\e \to \infty$ as $\e \to 0$.

Under this condition, the final estimate for $v$ reads
\begin{equation}\label{stimafinalev}
|v|_{{}_{L^\infty}}(t) \leq e^{-\mu^\varepsilon t}|v_0|_{L^\infty}+c_1|\Omega^\varepsilon|_{{}_{L^\infty}}t, \quad {\rm for \ all \ } t \leq T^\e
\end{equation}
and the proof is completed.
\end{proof}

Let us stress that, in the proof of Theorem \ref{T1}, the negativity of $\lambda_1^\e$ is crucial in the construction of a {\it stable} family of generators. We also take advantage of  the expression of $\mathcal Q^\e$, where the first order {space} derivative of $v$ does not appear;  indeed, this allows us to estimate the nonlinear term $\mathcal R^\e$ via the $L^\infty$- norm of $v$.

\vskip0.2cm
If we start from an initial datum $v_0$ with a weaker regularity, precisely belonging to $L^2$, we can prove an estimate analogous to \eqref{stimafinalev} for the $L^2$ norm of the solution, but the following additional technical hypothesis is needed.

\vskip0.2cm
{\bf H4.1} Given $\xi\in I$, let $\varphi^\varepsilon_k(\cdot;\xi)$ and $\bpsi^\varepsilon_k(\cdot;\xi)$
be a sequence of eigenfunction for the operators $\mathcal{L}^\varepsilon_{\xi}$ and 
$\mathcal{L}^{\varepsilon,\ast}_{\xi}$ respectively; we assume 
\begin{equation}\label{derpsiphi}
	\sum_{j} \langle \partial_\xi \bpsi^\varepsilon_k, \varphi^\varepsilon_j\rangle^2
	=\sum_{j} \langle \bpsi^\varepsilon_k, \partial_\xi \varphi^\varepsilon_j\rangle^2
	\leq C.
\end{equation}
for all $k$ and for some constant $C$ independent on the parameter $\xi$.

\vskip.15cm
\begin{theorem}\label{T1bis}
Let  the couple $(\xi,v)$ be the solution to initial-value problem \eqref{NL}. If the hypotheses {\bf H1-2-3-4.1} are satisfied, then, for every $\varepsilon$ sufficiently small {there exists a time $T^\e$ such that}, for every ${v}_0 \in L^2(I)$  and for every $t \leq T^\varepsilon$, there holds for the solution ${v}$
\begin{equation*}
|{v}-{z}|_{{}_{L^2}}(t) \leq C \left( |\Omega^\varepsilon|_{{}_{L^\infty}} +\exp\left(\int_0^t \lambda_1^\varepsilon(\xi(\tau)) \, d\tau\right) |{v}_0|^2_{{}_{L^2}}\right),
\end{equation*}
where the function ${z}$ is defined as
\begin{equation*}
{z}(x,t):=\sum_{k\geq 2} v_k(0)\,\exp \left( \lambda_k^\varepsilon(\xi(\tau)) \, d\tau\right)\,\varphi^\varepsilon_k(x;\xi(t)).
\end{equation*}
Moreover, the time $T^\varepsilon$ is of order $  |\sup\limits_{\xi\in I} \lambda_1^\varepsilon (\xi)|^{-1}$, hence diverging to $+\infty$ as $\varepsilon \to 0$.
\end{theorem}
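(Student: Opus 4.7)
The approach is to decompose $v = z + w$, where $z$ is the finite-rank function defined in the statement and $w$ is the correction, and to close an $L^2$ estimate for $w$ by expanding along the eigenmodes of $\mathcal{L}^\varepsilon_\xi$. The spectral gap from \textbf{H2} provides strong decay for $k\geq 2$, while the projection identity $v_1 \equiv 0$ imposed in Section 2 eliminates the singular first mode and ensures $w(0) = 0$.

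First I would set $v_k(t) := \langle \psi_k^\varepsilon(\cdot;\xi(t)), v(\cdot,t)\rangle$ and differentiate along \eqref{NL}; using biorthogonality and $\langle \psi_k, \mathcal{L}^\varepsilon_\xi v\rangle = \lambda_k^\varepsilon v_k$, this yields
\begin{equation*}
\dot v_k = \lambda_k^\varepsilon(\xi)\,v_k + G_k(t), \qquad G_k := \dot\xi\,\langle\partial_\xi\psi_k^\varepsilon,v\rangle + \langle\psi_k^\varepsilon,H^\varepsilon\rangle + \langle\psi_k^\varepsilon,\mathcal{M}^\varepsilon_\xi v\rangle + \langle\psi_k^\varepsilon,\mathcal{R}^\varepsilon[v,\xi]\rangle.
\end{equation*}
Setting $z_k(t) := v_k(0)\exp(\int_0^t\lambda_k^\varepsilon(\xi(\tau))\,d\tau)$ for $k\geq 2$, so that $z$ matches the function in the statement, the mode $w_k = v_k - z_k$ satisfies the same linear equation with vanishing initial condition. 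Duhamel and \textbf{H2} then give
\begin{equation*}
|w_k(t)|^2 \leq \frac{\varepsilon^\alpha}{c_2}\int_0^t e^{-c_2(t-s)/\varepsilon^\alpha}|G_k(s)|^2\,ds.
\end{equation*}

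Summing over $k\geq 2$, using that $|u|_{L^2}^2\sim\sum_k\langle\psi_k,u\rangle^2$ for biorthogonal Riesz bases, I would then bound each contribution to $\sum_k|G_k|^2$ separately. Hypothesis \textbf{H1} combined with the cancellation $\langle\psi_1,H^\varepsilon\rangle=0$ arising from the normalization $\langle\psi_1,\partial_\xi U^\varepsilon\rangle=1$ controls the $H^\varepsilon$ contribution by $C|\Omega^\varepsilon|_{L^\infty}^2$; estimate \eqref{asyMH} for $\mathcal M^\varepsilon$ gives $c|\Omega^\varepsilon|_{L^\infty}^2|v|_{L^2}^2$; the quadratic structure of $\mathcal Q^\varepsilon$, depending only on $v$ and not on $\partial_x v$, produces $|\mathcal R^\varepsilon|_{L^2}^2\leq C|v|_{L^2}^4$; and \textbf{H4.1}, through the dual identity $\langle\partial_\xi\psi_k,\varphi_j\rangle = -\langle\psi_k,\partial_\xi\varphi_j\rangle$ obtained by differentiating the biorthogonality relation in $\xi$, controls the drift term together with the bound on $\dot\xi$ coming from \eqref{eqxiNL}, \textbf{H1} and \textbf{H3}.

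To close the estimate I would split $|v|_{L^2}^2 \leq 2|z|_{L^2}^2 + 2|w|_{L^2}^2$ inside the quadratic remainder and argue as in the bootstrap step at the end of the proof of Theorem \ref{T1}: the $z$ contribution is bounded by $|v_0|_{L^2}^2\exp(2\int_0^t\lambda_2^\varepsilon\,d\tau)\leq|v_0|_{L^2}^2\exp(\int_0^t\lambda_1^\varepsilon\,d\tau)$, which produces the exponential factor in the statement, while the $|w|_{L^2}^2$ term is absorbed into the left-hand side by the smallness of $\varepsilon^\alpha$ for $t\leq T^\varepsilon$ with $T^\varepsilon\sim|\sup_\xi\lambda_1^\varepsilon(\xi)|^{-1}$, and Gronwall yields the claimed bound. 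The principal technical obstacle is the drift term $\sum_k\langle\partial_\xi\psi_k,v\rangle^2$: a naive Cauchy--Schwarz using only the row bound in \textbf{H4.1} does not give summability in $k$, so one must exploit the two-sided form of \textbf{H4.1} together with a Parseval-type identity for the family $\{\partial_\xi\varphi_j\}$ to transfer the summability between the two indices, which is exactly why the hypothesis is formulated as an equality between two different sums rather than a single bound.
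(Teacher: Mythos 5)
Your overall architecture — expand $v$ along eigenmodes, set $z_k=v_k(0)\exp\bigl(\int_0^t\lambda_k^\varepsilon\,d\tau\bigr)$ for $k\geq 2$, obtain a scalar ODE for each mode with source $G_k$, and close with a quadratic bootstrap in the weighted sup-norm $N(t)=\sup_{s\leq t}E_1(s,0)|v-z|_{L^2}$ — coincides with the paper's proof. The drift term, the normalization $\langle\psi_1,\partial_\xi U^\varepsilon\rangle=1$, and the use of \textbf{H4.1} all appear there in the same role, and the final condition fixing $T^\varepsilon$ is the same $N\leq AN^2+B$ argument. So the route is not genuinely different.

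There is, however, a gap in the specific way you propose to sum over $k$. After Cauchy--Schwarz on the Duhamel integral you arrive at
\begin{equation*}
|w_k(t)|^2 \leq \frac{\varepsilon^\alpha}{c_2}\int_0^t e^{-c_2(t-s)/\varepsilon^\alpha}\,|G_k(s)|^2\,ds,
\end{equation*}
and you then want to sum in $k$ and invoke a Parseval-type bound on $\sum_{k\geq 2}|G_k|^2$. This fails for the $H^\varepsilon$ contribution. Hypothesis \textbf{H1} is a duality bound against $L^\infty$ test functions, $|\langle\psi,\mathcal P^\varepsilon[U^\varepsilon]\rangle|\leq\Omega^\varepsilon|\psi|_{L^\infty}$, not an $L^2$ bound; in the conservation-law case of Section 4 one has $\mathcal P^\varepsilon[U^\varepsilon]=\varepsilon\ldbrack\partial_x U^\varepsilon\rdbrack\,\delta_{x=\xi}$, so $H^\varepsilon\notin L^2$ and the coefficients $\langle\psi_k,H^\varepsilon\rangle\approx\Omega^\varepsilon\,\psi_k(\xi)$ are bounded in $k$ but need not be square-summable. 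Hence $\sum_{k\geq 2}\langle\psi_k,H^\varepsilon\rangle^2$ can diverge, and your inequality gives nothing. The same issue recurs, as you yourself flag, in the drift term $\sum_k\langle\partial_\xi\psi_k,v\rangle^2$: the row bound in \textbf{H4.1} only yields $\langle\partial_\xi\psi_k,v\rangle^2\leq C|v|_{L^2}^2$ per mode, which is not summable either. The paper avoids both difficulties by never forming $\sum_k|G_k|^2$: it keeps the uniform-in-$k$ pointwise bound $|\langle\psi_k,F_1\rangle|\leq C\Omega^\varepsilon(1+|v|_{L^2}^2)$ inside the integral, writes $|v-z|_{L^2}\leq\sum_{k\geq 2}\int_0^t|\langle\psi_k,F_1\rangle+\langle\psi_k,F_2\rangle|\,E_k(s,t)\,ds$, and trades the $k$-sum against the strong decay of the exponentials via the estimate $\sum_{k\geq 2}E_k(s,t)\leq C(t-s)^{-1/2}E_2(s,t)$, together with the integral bounds \eqref{stimeintegrali}. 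To repair your argument you would need either to show $\langle\psi_k,H^\varepsilon\rangle$ decays in $k$ (not contained in \textbf{H1}), or to abandon the Cauchy--Schwarz-then-Parseval step and fold the $k$-decay of $E_k$ into the estimate as the paper does.
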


\begin{proof}[Proof of Theorem \ref{T1bis}]
Setting
\begin{equation*}
	{v}(x,t)=\sum_{j} {v}_j(t)\,\varphi^\varepsilon_j(x,{\xi}(t)),
\end{equation*}
we obtain an infinite-dimensional differential system for the coefficients $v_j$
\begin{equation}\label{eqwk_bis}
	\frac{dv_k}{dt}=\lambda^\varepsilon_k(\xi)\,v_k
		+\langle \bpsi^\varepsilon_k,{F_1}\rangle + \langle \bpsi^\varepsilon_k,{F_2}\rangle
\end{equation}
where, omitting the dependencies for shortness,
\begin{equation*}
\begin{aligned}
	{F_1}&:={H}^\varepsilon+\sum_{j} v_j\,\Bigl\{{\mathcal M}^\varepsilon_\xi\, \varphi^\varepsilon_j
			-\partial_\xi \varphi^\varepsilon_j\,\frac{d\xi}{dt}\Bigr\}
		={H}^\varepsilon-\theta^\varepsilon \sum_{j}\Bigl({a}_j+\sum_{\ell} {b}_{j\ell}\,v_\ell\Bigr)v_j, \\
	{F_2}&:= {\mathcal Q}^\varepsilon- \left( \sum_j \partial_\xi \varphi_j^\varepsilon v_j + \partial_\xi {U}^\varepsilon\right) \left \{  \frac{\langle \bpsi_1^\varepsilon,{ \mathcal Q}^\varepsilon \rangle}{1-\langle \partial_\xi \bpsi_1^\varepsilon, {v} \rangle }- \theta^\varepsilon \frac{\langle \partial_\xi \bpsi_1^\varepsilon, {v} \rangle^2}{1-\langle \partial_\xi \bpsi_1^\varepsilon, {v} \rangle }  \right\}= {\mathcal Q}^\varepsilon- {\mathcal N}^\varepsilon.
\end{aligned}
\end{equation*}
The coefficients ${a}_j$, ${b}_{jk}$ are given by
\begin{equation*}
	{a}_j:=\langle \partial_{\xi} \bpsi^\varepsilon_1, \varphi^\varepsilon_j\rangle\,
			\partial_{\xi}{U}^{\varepsilon}+\partial_\xi \varphi^\varepsilon_j,
	\qquad
	{b}_{j\ell}:=\langle \partial_{\xi} \bpsi^\varepsilon_1, \varphi^\varepsilon_\ell\rangle
			\,\partial_\xi \varphi^\varepsilon_j.
\end{equation*}
Convergence of the series is guaranteed by assumption \eqref{derpsiphi}. Now let us set
\begin{equation*}
	E_k(s,t):=\exp\left( \int_s^t \lambda_k^\varepsilon(\xi(\tau))d\tau\right).
\end{equation*}
Note that, for $0\leq s<t$, there hold
\begin{equation*}
	E_k(s,t)=\frac{E_k(0,t)}{E_k(0,s)}
		\qquad\textrm{and}\qquad	
	0\leq E_k(s,t)\leq e^{\Lambda^\e_k(t-s)},\quad\text{{where}}\;{\Lambda_k^\varepsilon := \sup\limits_{\xi\in I} \lambda_k^\varepsilon (\xi)}.
\end{equation*}
From equalities {\eqref{eqwk_bis}} and and since there holds $v_1 \equiv 0$,  there follows
\begin{equation*}
	\begin{aligned}
	v_k(t)&=v_k(0)\,E_k(0,t)\\
		& \quad+\int_0^t \Bigl\{\langle \bpsi^\varepsilon_k,{H}^\varepsilon\rangle
			-\theta^\varepsilon(\xi)\sum_{j}\Bigl(\langle \bpsi^\varepsilon_k, {a}_j\rangle 
			+\sum_{\ell} \langle \bpsi^\varepsilon_k, {b}_{j\ell}\rangle \,v_\ell\Bigr) v_j
			\Bigr\} E_k(s,t)\,ds \\
		& \quad+\int_0^t  \Bigl\{\langle \bpsi^\varepsilon_k,{\mathcal Q}^\varepsilon\rangle-\langle \bpsi^\varepsilon_k,{\mathcal N}^\varepsilon\rangle \bigr\} E_k(s,t) \, ds
	\end{aligned}
\end{equation*}
for $k\geq 2$. Now let us introduce the function 
\begin{equation*}
	{z}(x,t):=\sum_{k\geq 2}v_k(0)\,E_k(0,t)\,\varphi^\varepsilon_k(x;\xi(t)),
\end{equation*}
which satisfies the estimate $|{z}|_{{}_{L^2}}\leq |{v}_0|_{{}_{L^2}} e^{\Lambda^\varepsilon_2\, t}$. From the representation formulas for the coefficients $v_k$, there holds
\begin{equation*}
|{v}-{z}|^2_{{}_{L^2}} \leq \sum_{k \geq 2} \left\{  \int_0^t \Bigl ( |\langle \bpsi_k^\varepsilon, {F} \rangle|+ |\langle \bpsi_k^\varepsilon, {G}\rangle|\Bigr)E_k(s,t) \, ds\right\}^2.
\end{equation*}
Moreover, since
\begin{equation*}
 	|\theta^\varepsilon(\xi)|\leq C\,\Omega^\varepsilon(\xi)
		\quad\textrm{and}\quad
	|\langle \bpsi^\varepsilon_k,{H}^\varepsilon\rangle| \leq C\,\Omega^\varepsilon(\xi)
		\left\{1+ |\langle \bpsi_k^\varepsilon, \partial_\xi {U}^\varepsilon\rangle|\right\}
\end{equation*}
for some constant $C>0$ depending on the $L^\infty-$norm of $\bpsi^\varepsilon_k$,
there holds
\begin{equation*}
	\begin{aligned}
	\sum_{k \geq 2} \Bigl(  \int_0^t &  |\langle \bpsi_k^\varepsilon, {F_1} \rangle|  \, E_k(s,t) \, ds \Bigr)^2 \leq \\
	& \leq C\sum_{k\geq 2}\Bigl(\int_0^t
		 \Omega^\varepsilon(\xi)\Bigl(1+ |\langle \bpsi_k^\varepsilon, \partial_\xi {U}^\varepsilon\rangle|
		+|\langle \bpsi^\varepsilon_k,\partial_{\xi}{U}^{\varepsilon}\rangle|
			\sum_{j}|\langle \partial_{\xi} \bpsi^\varepsilon_1, \varphi^\varepsilon_j\rangle||v_j|\\
	&\quad
		+\sum_{j}|\langle \partial_\xi \bpsi^\varepsilon_k,  \varphi^\varepsilon_j\rangle||v_j|
		+\sum_{j} |\langle \bpsi^\varepsilon_k,\partial_\xi \varphi^\varepsilon_j\rangle|\,|v_j|
		\sum_{\ell}|\langle \partial_{\xi} \bpsi^\varepsilon_1, \varphi^\varepsilon_\ell\rangle|\,|v_\ell|\Bigr)
			E_k(s,t)\Bigr)^2\\
	&
	\leq C\sum_{k\geq 2}\Bigl(\int_0^t
		 \Omega^\varepsilon(\xi)\bigl(1+|{v}|_{{}_{L^2}}^2\bigr)E_k(s,t)\,ds\Bigr)^2.
	\end{aligned}
\end{equation*}
On the other side, concerning the nonlinear terms, there holds
\begin{equation*}
\begin{aligned}
\sum_{k \geq 2} \Bigl(  \int_0^t  & |\langle \bpsi_k^\varepsilon, {F_2} \rangle| \, E_k(s,t) \, ds \Bigr)^2 \leq  \\ & \leq C \sum_{k \geq 2} \Bigl( \int_0^t|\langle\bpsi_k^\varepsilon, {\mathcal Q}^\varepsilon \rangle|  +\left|  \frac{\langle \bpsi_1^\varepsilon,{\mathcal Q}^\varepsilon \rangle}{1-\langle \partial_\xi \bpsi_1^\varepsilon,{v}\rangle}\right|\,\Bigl( \sum_{j} |\langle\partial_\xi \varphi^\varepsilon_j, \bpsi_k^\varepsilon \rangle| |{v}_j|+ |\langle \bpsi_k^\varepsilon, \partial_\xi {U}^\varepsilon\rangle| \Bigl) \\
&\quad+ \Omega^\varepsilon(\xi) \, \left|  \frac{\langle\partial_\xi \bpsi^\varepsilon_1,{v} \rangle^2}{1-\langle \partial_\xi \bpsi_1^\varepsilon,{v}\rangle}\right| \, \Bigl( \sum_{j} |\langle\partial_\xi \varphi^\varepsilon_j, \bpsi_k^\varepsilon \rangle| |{v}_j|+ |\langle \bpsi_k^\varepsilon, \partial_\xi {U}^\varepsilon\rangle|\Bigl) \, ds \, \Bigr)^2.
\end{aligned}
\end{equation*}
Moreover, since $|{\mathcal Q}^\varepsilon|_{{}_{L^1}} \leq C |{v}|^2_{{}_{L^2}}$, we have
\begin{equation*}
      \begin{aligned}
      |\langle\bpsi_k^\varepsilon, {\mathcal Q}^\varepsilon \rangle| &\leq C |{v}|^2_{{}_{L^2}}, \\
       \Bigl|\sum_{j} \langle \bpsi_k^\varepsilon, \partial_\xi \varphi_j^\varepsilon\rangle {v}_j\Bigr|& \leq C |{v}|_{{}_{L^2}},\; \\
      |\langle \bpsi_k^\varepsilon, \partial_\xi {U}^\varepsilon\rangle|\, \frac{|\langle \bpsi_1^\varepsilon,{\mathcal Q}^\varepsilon \rangle|}{|1-\langle \partial_\xi \bpsi_1^\varepsilon,{v}\rangle|} & \leq \frac{C|{v}|^2_{{}_{L^2}}}{1-C|{v}|_{{}_{L^2}} }\leq 2 C |{v}|^2_{{}_{L^2}},\; \\
      |\theta^\varepsilon\langle \partial_\xi \bpsi_1^\varepsilon, {v} \rangle^2 \, \langle \bpsi_k^\varepsilon, \partial_\xi{ U}^\varepsilon\rangle| &\leq C\, \Omega^\varepsilon(\xi) |{v}|^2_{{}_{L^2}},
      \end{aligned}
\end{equation*}
so that we end up with
\begin{equation*}
      \begin{aligned}
      \sum_{k \geq 2} \Bigl(  \int_0^t  |\langle \bpsi_k^\varepsilon, {F_2} \rangle| \, E_k(s,t) \, ds \Bigr)^2 &\leq
 C \sum_{k \geq 2} \Bigl( \int_0^t |{v}|_{{}_{L^2}}^2 (1+ \Omega^\varepsilon(\xi)) \, E_k(s,t) \, ds\Bigr)^2.   
    \end{aligned}
\end{equation*}
Since $\sqrt{a+b}\leq \sqrt{a}+\sqrt{b}$, we infer
\begin{equation*}
	\begin{aligned}
	|{v}-{z}|_{{}_{L^2}}&\leq 
		\sum_{k\geq 2}\int_0^t\left \{\Omega^\varepsilon(\xi)
			\bigl(1+|{v}|_{{}_{L^2}}^2\bigr)+  |{v}|_{{}_{L^2}}^2 \right \}E_k(s,t)\,ds\\
				&\leq 
		C\int_0^t \Bigl\{\Omega^\varepsilon(\xi)\bigl(1+|{v}|_{{}_{L^2}}^2\bigr) + |{v}|^2_{{}_{L^2}}\Bigr\}\,\sum_{k\geq 2} E_k(s,t)\,ds.
	\end{aligned}
\end{equation*}
The assumption on the asymptotic behavior of the eigenvalues $\lambda^\varepsilon_k$
can now be used to bound the series.
Indeed, there holds
\begin{equation*}
	\sum_{k\geq 2} E_k(s,t) \leq E_2(s,t) \sum_{k\geq 2} \frac{E_k(s,t)}{E_2(s,t)}
		\leq C\,(t-s)^{-1/2}\,E_2(s,t).
\end{equation*}
As a consequence,   we infer
\begin{align*}
	 |{v}-{z}|_{{}_{L^2}}
		&\leq  C\int_0^t \Omega^\varepsilon(\xi)(t-s)^{-1/2}\,E_2(s,t) \, ds\\
		&\quad +{C}\int_0^t \Bigl\{ |{v}-{z}|^2_{{}_{L^2}}+|{z}|^2_{{}_{L^2}} \Bigr\}\,(t-s)^{-1/2}\,E_2(s,t)\,ds.
	\end{align*}
Now, setting $N(t):=  \sup\limits_{s\in[0,t]} E_1(s,0)\, |({v}-{z})(s)|_{{}_{L^2}}$, we obtain 
\begin{equation*}
	\begin{aligned}
	E_1(t,0)\,|{v}-{z}|_{{}_{L^2}} &\leq C \int_0^t N^2(s) (t-s)^{-1/2}\,E_2(s,t) \,E_1(0,s)\,ds\\
	& \quad + C\int_0^t \Omega^\varepsilon(\xi)(t-s)^{-1/2}\, E_2(s,t)\, E_1(s,0)\,ds \\
		& \quad + C\int_0^t  |{v}_0|^2_{{}_{L^2}} \, e^{2\Lambda^\e_2 s} \,
		(t-s)^{-1/2}\,E_2(s,t)\, E_1(s,0)\,ds\\
		&\leq C_1 N^2(t) \, E_1(0,t)+\left( |\Lambda^\e_2|^{-1/2}  |\Omega^\varepsilon|_{{}_{L^\infty}} E_1(t,0) + |\Lambda^\e_2|^{-3/4} |{v}_0|^2_{{}_{L^2}}\right),
	\end{aligned}
\end{equation*}
{where we used}
\begin{equation}\label{stimeintegrali}
\begin{aligned}
	&\int_0^t e^{(2\Lambda^\varepsilon_2-\Lambda_1^\varepsilon)s}\,ds
		\leq\frac{1}{\Lambda_2^\varepsilon}(e^{\Lambda^\varepsilon_2 t}-1)\leq  \frac{1}{|\Lambda_2^\varepsilon|},\\
	&\int_0^t (t-s)^{-1/2}\,E_2(s,t)\,ds
		\leq \int_0^t (t-s)^{-1/2}\,e^{\Lambda_2^\varepsilon\,(t-s)}\,ds
		\leq \frac{1}{|\Lambda_2^\varepsilon|^{1/2}}
\end{aligned}
\end{equation}
and $C_1$ and $C_2$ depend on $\Lambda_2^\varepsilon$. Hence, as soon as 
\begin{equation}\label{finalt}
4 C_1  \,{\Big(}|\Lambda^\e_2|^{-1/2}|\Omega^\varepsilon|_{{}_{L^\infty}} +  E_1(0,t)|\Lambda^\e_2|^{-3/4}|{v}_0|^2_{{}_{L^2}} \Big) <1
\end{equation}  
we obtain the following $L^2-$estimate for the difference ${v}-{z}$
\begin{equation}\label{estfin}
|{v}-{z}|_{{}_{L^2}} \leq  \left( |\Omega^\varepsilon|_{{}_{L^\infty}} + |\Lambda^\e_2|^{-3/4}\,|{v}_0|^2_{{}_{L^2}}  \, E_1(0,t) \right),
\end{equation}
where $E_1(0,t)$ behaves like $e^{-c t }$, since $\lambda_1^\e <0$, and $|\Lambda^\e_2|\sim \e^{-\alpha}$ for some $\alpha >0$. Condition \eqref{finalt} is a condition on the final time $T^\varepsilon$ and it can be rewritten as
\begin{equation*}
e^{\Lambda_1^\varepsilon t} \leq C\;\frac{1-|\Omega^\varepsilon|_{{}_{L^\infty}}}{ |\Lambda^\e_2|^{-3/4}\, |{v}_0|^2_{{}_{L^2}}}.
\end{equation*}
Hence, $T^\varepsilon$ can be chosen of order  $\left(\ln |\Lambda^\e_2|^{3/4}\right) |\Lambda^\varepsilon_1|^{-1}$, which is diverging to $+\infty$ as $|\Lambda^\e_1|^{-1}$ for $\e \to 0$.

\end{proof}

\subsection{{The case $\lambda^\e_1>0$ and the quadratic term $\mathcal Q^\e$ depending only on $v$}}

Since $\lambda_1^\e$ is positive, we can no longer use the theory of \cite{Pazy83}; indeed, we cannot state anymore that $\lambda_k^\varepsilon \leq -|\Lambda_1^\varepsilon| <0$ for all $k \geq 1$, so that we cannot construct a stable family of generators for $\mathcal L^\e_\xi$ as in the proof of Theorem \ref{T1}. In this case we can prove an estimate analogous to the one proved in Theorem \ref{T1bis}.

\vskip.15cm
\begin{theorem}\label{T2}
Let the hypotheses of Theorem \ref{T1bis} hold. Then, for every $t \leq T^\varepsilon$, there holds for the solution ${v}$
\begin{equation}\label{estfin2}
|{v}-{z}|_{{}_{L^2}}(t) \leq C \left( |\Omega^\varepsilon|_{{}_{L^\infty}} + |\Lambda_2^\e|^{-3/4} |{v}_0|^2_{{}_{L^2}}\right),
\end{equation}
where the function ${z}$ and the time $T^\varepsilon$ are defined as in Theorem \ref{T1bis}, and $\Lambda_2^\e:= \sup\limits_{\xi\in I} \lambda^\varepsilon_2(\xi) $. 
\end{theorem}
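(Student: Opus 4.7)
The plan is to recycle the eigenfunction-expansion strategy from the proof of Theorem~\ref{T1bis}. The crucial observation is that the projection condition $v_1\equiv 0$ built into system~\eqref{NL} holds regardless of the sign of $\lambda_1^\varepsilon$, so the representation
\[
v(x,t)=\sum_{k\geq 2}v_k(t)\,\varphi_k^\varepsilon(x;\xi(t)),\qquad
v_k(t)=v_k(0)\,E_k(0,t)+\int_0^t\langle\bpsi_k^\varepsilon,F_1+F_2\rangle\,E_k(s,t)\,ds,
\]
involves only modes $k\geq 2$; for all such $k$ the eigenvalues $\lambda_k^\varepsilon$ remain negative and uniformly bounded above by $\Lambda_2^\varepsilon\sim -\varepsilon^{-\alpha}$, so every heat-kernel type integral appearing in the analysis is insensitive to the sign change of $\lambda_1^\varepsilon$.

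First I would carry over, essentially verbatim, the bookkeeping of $F_1$ and $F_2$ performed in Theorem~\ref{T1bis}: use of assumption~\eqref{derpsiphi} to sum the eigenfunction series, the splitting of the nonlinear terms $\mathcal Q^\varepsilon$ and $\mathcal N^\varepsilon$, and the bound $|\theta^\varepsilon|\leq C\Omega^\varepsilon$. This produces the same integral inequality
\[
|v-z|_{{}_{L^2}}(t)\leq C\int_0^t\Bigl\{\Omega^\varepsilon(\xi)\bigl(1+|v|^2_{{}_{L^2}}\bigr)+|v|^2_{{}_{L^2}}\Bigr\}(t-s)^{-1/2}E_2(s,t)\,ds,
\]
whose derivation never appeals to the sign of $\lambda_1^\varepsilon$. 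The two elementary estimates from~\eqref{stimeintegrali}, together with $|z|^2_{{}_{L^2}}\leq |v_0|^2_{{}_{L^2}}e^{2\Lambda_2^\varepsilon t}$, are reusable as-is; in particular the bound $\int_0^t e^{2\Lambda_2^\varepsilon s}(t-s)^{-1/2}E_2(s,t)\,ds\leq C|\Lambda_2^\varepsilon|^{-3/4}$ is the source of the $|\Lambda_2^\varepsilon|^{-3/4}$ factor in~\eqref{estfin2}.

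The key modification occurs at the absorption step. In Theorem~\ref{T1bis} one multiplied through by the weight $E_1(t,0)$ to close the nonlinear Gronwall loop; here that weight is bounded (indeed decays) and produces no gain. Instead I would work directly with $M(t):=\sup_{s\in[0,t]}|(v-z)(s)|_{{}_{L^2}}$: splitting $|v|^2_{{}_{L^2}}\leq 2M(t)^2+2|z|^2_{{}_{L^2}}$ in the integral inequality and taking the supremum over $[0,t]$ turns it into a quadratic bound
\[
M(t)\leq A^\varepsilon+B^\varepsilon M(t)^2,\quad A^\varepsilon=C\bigl(|\Omega^\varepsilon|_{{}_{L^\infty}}+|\Lambda_2^\varepsilon|^{-3/4}|v_0|^2_{{}_{L^2}}\bigr),\quad B^\varepsilon=C|\Lambda_2^\varepsilon|^{-1/2}(1+|\Omega^\varepsilon|_{{}_{L^\infty}}).
\]
Under the smallness condition $4A^\varepsilon B^\varepsilon<1$, which is automatic for $\varepsilon$ small by hypotheses~\textbf{H1--H3} and $|\Lambda_2^\varepsilon|^{-1}\sim\varepsilon^{\alpha}$, the quadratic inequality yields $M(t)\leq 2A^\varepsilon$, which is precisely~\eqref{estfin2}.

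The main obstacle will be to justify the announced lifespan $T^\varepsilon$. Contrary to the situation of Theorem~\ref{T1bis}, the estimate is not accompanied by an exponentially decaying factor $E_1(0,t)$ in front of $|v_0|^2_{{}_{L^2}}$, so at face value the quadratic bound holds uniformly in~$t$. The restriction $T^\varepsilon\sim|\Lambda_1^\varepsilon|^{-1}$ must therefore be extracted from the feedback with the $\xi$-equation in~\eqref{NL}: the linearization $u=U^\varepsilon+v$ remains meaningful only as long as $\xi(t)$ stays within a suitably small neighborhood of $\xi^*$, and the drift of $\xi$ -- driven by $\theta^\varepsilon$ and behaving like $e^{\Lambda_1^\varepsilon t}$ -- is what produces the claimed $T^\varepsilon$, diverging as $\varepsilon\to 0$.
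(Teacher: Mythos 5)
Your proposal is correct in substance but deviates from the paper's route at the absorption step, and the deviation is interesting to spell out. The paper simply repeats the proof of Theorem~\ref{T1bis} verbatim -- \emph{including} the multiplication by the weight $E_1(t,0)$ and the quantity $N(t)=\sup_{s\in[0,t]}E_1(s,0)|(v-z)(s)|_{{}_{L^2}}$ -- and arrives at $|v-z|_{{}_{L^2}}\leq |\Omega^\varepsilon|_{{}_{L^\infty}}+|\Lambda_2^\varepsilon|^{-3/4}|v_0|^2_{{}_{L^2}}E_1(0,t)$. Now that $\lambda_1^\varepsilon>0$, the factor $E_1(0,t)$ grows, and the lifespan $T^\varepsilon\sim |\Lambda_1^\varepsilon|^{-1}$ is read off directly from the requirement, inherited from condition \eqref{finalt}, that $E_1(0,t)$ stay bounded; the $E_1(0,t)$ term is then hidden in the constant $C$ of \eqref{estfin2}. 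You instead drop the exponential weight (correctly observing that when $\lambda_1^\varepsilon>0$ multiplying by the decaying factor $E_1(t,0)$ yields no gain) and run the quadratic Gronwall argument on the unweighted supremum $M(t)$. This is cleaner and in fact slightly stronger: a direct estimate of $\int_0^t e^{2\Lambda_2^\varepsilon s}(t-s)^{-1/2}E_2(s,t)\,ds$ shows it is bounded uniformly in $t$ by $C|\Lambda_2^\varepsilon|^{-1/2}$, so your inequality $M(t)\leq A^\varepsilon+B^\varepsilon M(t)^2$ is indeed uniform in $t$ and closes for all time once $4A^\varepsilon B^\varepsilon<1$, needing no $E_1(0,t)$ at all. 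The one place where your write-up does not match the paper is the final speculation that the restriction $t\leq T^\varepsilon$ must be extracted from the $\xi$-equation: in the paper's argument the restriction comes from controlling $E_1(0,t)$ in the $v$-estimate itself, not from the coupling with $\xi$. Since your version of the argument does not produce an $E_1(0,t)$ factor, this speculation is harmless (your bound simply holds on a longer interval than claimed), but it should be flagged as a guess rather than a reconstruction of the paper's reasoning. A minor bookkeeping remark: in the paper the exponent $-3/4$ arises from the additional $E_1(s,0)$ inside the weighted integral, whereas your unweighted integral already gives the (better) exponent $-1/2$; attributing $-3/4$ to the unweighted kernel is therefore not quite accurate, though it is irrelevant for the validity of \eqref{estfin2}.
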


\begin{remark}{\rm

From hypothesis {\bf H2}, we know that $\lambda_2^\e \leq -c/\e^{\alpha}$, for some $\alpha \geq 0$. Hence, if $\alpha$ is strictly positive, \eqref{estfin2} assures the convergence to zero of the perturbation $v$ {as $\e\to0$}; on the contrary, if $\alpha=0$, we need to restrict our analysis to the case of small  (with respect to $\varepsilon$) initial data, i.e. we need to require ${v}_0 \in L^2(I)$ such that $|{v}_0|_{{}_{L^2}} \leq c \, \varepsilon$. This is a small deterioration of the estimate, consequence of the instability of the steady state, but it is however consistent with the case considered in \cite{CarrPego89}.
}
\end{remark}

\begin{proof}
 The proof is exactly the same as in Theorem \ref{T1bis};  recalling  \eqref{stimeintegrali}, we end up with the following estimate
 \begin{equation*}
|{v}-{z}|_{{}_{L^2}} \leq  \left( |\Omega^\varepsilon|_{{}_{L^\infty}} +  |\Lambda^\e_2|^{-3/4} \, | {v}_0|^2_{{}_{L^2}}  \, E_1(0,t) \right).
\end{equation*}
 Since $\lambda_1^\e$ is positive,  we can no longer assure the convergence to zero of the term $|{v}_0|^2_{{}_{L^2}}  \, E_1(0,t)$ in \eqref{estfin}; indeed, since $\lambda_1^\e \to 0$ as $\e \to 0$, $E_1(0,t)$ is converging to a constant for small $\e$ and large $t$. 
  We here use hypothesis {\bf H2} concerning the behavior of $\lambda_k^\e$, $k \geq 2$ and we recall that, if $\alpha=0$, we have the assumption $|{v}_0|_{{}_{L^2}} \leq c \, \varepsilon$.

\end{proof}

\begin{remark}{\rm
Let un consider the Allen-Cahn equation, i.e. problem \eqref{burgers} with $a(x)=1$ and $G(u,\partial_x u)= g(u)$, for some $g$ satisfying the following  assumptions: there exists a $C^2$ potential function $W: \R \to \R$ such that ${g}({u})= W'({u})$, and  we assume  $W$ to have  two distinct global minima $\pm {u}^*$ such that $W(\pm {u^*} )=0$;
in this case, is its well known (see, among others, \cite{AliFus08, Ste87}) that the only possible stable equilibrium solutions are constant in space and are given exactly by $\pm u^*$, while all the space dependent  steady states that present patterns of internal transition layers are unstable.
Additionally, in \cite{CarrPego89}, it is proven that the first eigenvalue of the linearized operator around an interface stationary solutions (which is unstable) is positive, but small in $\e$. Also, the nonlinear terms $\mathcal Q^\e$ depend only on the perturbation $v$, and can be estimated via the $L^2$- norm of $v$. This is an explicit and well known example where the equation exhibit a metastable behavior and Theorem \ref{T2} can be applied (see, for instance, \cite{Str13}).

Our guess is that, also in the case of a quasilinear second order term with $a(x)$ satisfying \eqref{ipoa}, a spectral analysis of the linearized operator can be performed in order to show that $\lambda^\e_1$ is positive and small in $\e$. 
}
\end{remark}

\subsection{{The case  $\mathcal Q^\e$ depending on $v$ and its space derivatives}}

\hskip1cm

{In order to prove an estimate for the perturbation $v$, we need an additional upper bound for the $L^2$-norm  of $\partial_x v$ and we have to consider $H^1$ initial data $v_0$.}

\begin{theorem}\label{T3}
Let hypotheses {\bf H1-2-H4.1} be satisfied and let us denote by $(\xi,v)$ the solution to the initial-value problem \eqref{NL}, with
\begin{equation*}
\xi(0)=\xi_0 \ \in \ I \quad {\rm and} \quad v(x,0)=v_0(x) \ \in \ H^1(I),
\end{equation*}
Then, for $\varepsilon$ sufficiently small, there exists a time $T^\varepsilon \geq 0$, such that, for any $t \leq T^\varepsilon$, the solution $v$ can be represented as
\begin{equation*}
v=z+R,
\end{equation*}
where $z$ is defined by
\begin{equation*}
	z(x,t):=\sum_{k\geq 2} v_k(0)\exp\left(\int_0^t \lambda^\varepsilon_k(\xi(\tau))\,d\tau\right)
		\,\varphi^\varepsilon_k(x;\xi(t)),
\end{equation*}
and the remainder $R$ satisfies the estimate
\begin{equation}\label{boundresto}
	|R|_{{}_{H^1}}\,\leq C\,
		\left\{ \varepsilon^\delta \, \exp\left( \int_0^t\lambda_1^\varepsilon(\xi(\tau))d\tau \right)|v_0|^2_{{}_{H^1}}+\varepsilon^{\alpha-\delta}+ |\Omega^\varepsilon|_{{}_{L^\infty}}\right\},
\end{equation}
for some constant $C>0$ and for some $\delta \in (0,\alpha)$, $\alpha > 0$. Furthermore, the final time $T^\varepsilon$ can be chosen of order $1/\e^{\gamma}$, for some $\gamma>0$.
\end{theorem}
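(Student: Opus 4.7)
The plan is to mirror the strategy of the proof of Theorem \ref{T1bis}, expanding $v=\sum_k v_k(t)\varphi^\varepsilon_k(x;\xi(t))$, writing the ODEs for the coefficients $v_k$ (with the same source terms $F_1$ and $F_2$ as in \eqref{eqwk_bis}), and analysing $R:=v-z$ via the resulting Duhamel representation. The difference with respect to the previous theorems is that, because $\mathcal{Q}^\varepsilon$ now depends also on $\partial_x v$, one only controls $|\mathcal{Q}^\varepsilon|_{L^1}\le C|v|_{H^1}^2$, so in the nonlinear estimates for $F_2$ the quantity $|v|_{L^2}^2$ that appears in the proof of Theorem \ref{T1bis} is systematically replaced by $|v|_{H^1}^2$. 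Consequently the Gronwall-type argument does not close on the $L^2$ norm alone and one is forced to propagate an $H^1$ bound on $R$ in parallel.

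The first step is to reproduce the $L^2$ estimate of Theorem \ref{T1bis}; thanks to hypothesis \textbf{H2}, $|\Lambda_2^\varepsilon|\ge c\,\varepsilon^{-\alpha}$, so the integrals appearing in \eqref{stimeintegrali} now contribute factors of size $\varepsilon^{\alpha/2}$ (and $\varepsilon^{3\alpha/4}$) instead of just constants. This yields
\begin{equation*}
|R|_{{}_{L^2}}(t)\le C\Bigl\{\varepsilon^{3\alpha/4}\,E_1(0,t)\,|v_0|^2_{{}_{H^1}}+\varepsilon^{\alpha/2}|\Omega^\varepsilon|_{{}_{L^\infty}}\Bigr\},
\end{equation*}
provided the quadratic nonlinearity can be closed, which at this stage still requires an a priori $H^1$ bound on $v$.

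To obtain such an $H^1$ bound, I would derive the PDE satisfied by $R$ itself,
\begin{equation*}
\partial_t R=\mathcal{L}^\varepsilon_\xi R+\mathcal{M}^\varepsilon_\xi v+H^\varepsilon+\mathcal{R}^\varepsilon[v,\xi]-\sum_{k\ge 2}v_k(0)E_k(0,t)\,\partial_\xi\varphi^\varepsilon_k\,\frac{d\xi}{dt},
\end{equation*}
and test it against $-\partial_x^2 R$. Using the ellipticity condition \eqref{ipoa}, integration by parts produces a coercive term $\varepsilon\alpha|\partial_x R|_{L^2}^2$, which absorbs the $H^1$-quadratic nonlinearity at the cost of a factor $\varepsilon^{-1}$ on the right-hand side. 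Equivalently, one may argue by the standard parabolic smoothing estimate
\begin{equation*}
|\partial_x\mathcal{T}_\xi(t,s)f|_{{}_{L^2}}\le C\,(\varepsilon(t-s))^{-1/2}\,e^{\Lambda_2^\varepsilon(t-s)}|f|_{{}_{L^2}},
\end{equation*}
which follows from \textbf{H2} together with the fact that the principal part of $\mathcal{L}^\varepsilon_\xi$ is $\varepsilon\partial_x(a(x)\partial_x\cdot)$. Plugging this bound into the Duhamel formula and proceeding as in the proof of Theorem \ref{T1bis} gives \eqref{boundresto}; the free parameter $\delta\in(0,\alpha)$ comes from interpolating between the parabolic smoothing rate $\varepsilon^{-1/2}$ and the spectral decay rate $\varepsilon^{\alpha/2}$ when splitting the time integral.

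The main obstacle is precisely this coupling: the estimate for $|R|_{H^1}$ requires an a priori bound on $|v|_{H^1}=|R+z|_{H^1}$, while the decay properties of $z$ itself only follow from the spectral picture encoded in \textbf{H2}. A bootstrap/continuity argument is therefore unavoidable, analogous to the condition $4AB\le 1$ leading to \eqref{tempofinale} in Theorem \ref{T1}: one fixes a largest time $T^\varepsilon$ for which all the quadratic remainders can be absorbed by the linear decay and the $\varepsilon$-powers in front of the source terms. Since the bound \eqref{boundresto} combines factors of the form $\varepsilon^\delta$, $\varepsilon^{\alpha-\delta}$, and $|\Omega^\varepsilon|_{L^\infty}$, the closing condition translates into an inequality of the shape $\varepsilon^\delta\,e^{\int_0^t\lambda_1^\varepsilon}\lesssim 1$, which, thanks to $\lambda_1^\varepsilon\to 0$, is satisfied up to a time $T^\varepsilon\sim\varepsilon^{-\gamma}$ for some $\gamma>0$ depending on $\alpha$ and $\delta$, consistently with the statement.
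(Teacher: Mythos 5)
Your plan matches the paper's for the $L^2$ part (spectral expansion of $v$, Duhamel on the coefficient ODEs, the $|\Lambda_2^\varepsilon|^{-3/4}$ gain from {\bf H2}), but you diverge on the $H^1$ step, and this is where the two approaches genuinely differ. The paper does not use an energy method or parabolic smoothing at all: it differentiates the PDE for $v$ in $x$ to get a PDE for $y=\partial_x v$, re-expands $y=\sum_j y_j\varphi^\varepsilon_j$, writes the ODEs for the $y_k$, and controls the awkward term $\langle\psi^\varepsilon_k,\partial_x(d\mathcal{P}^\varepsilon[U^\varepsilon]v)\rangle$ by an integration by parts followed by Young's inequality with a free parameter $\varepsilon^m$. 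That is exactly where the pair $(\varepsilon^m,\varepsilon^{-m}|v|^2_{H^1})$ comes from, and the final $\varepsilon^\delta/\varepsilon^{\alpha-\delta}$ split in \eqref{boundresto} is nothing but the optimal choice $m=\alpha-\delta$ against the known bound $|\Lambda_2^\varepsilon|\sim\varepsilon^{-\alpha}$. So the balance of exponents in the statement is produced by a Young-inequality parameter, not by interpolating a parabolic smoothing rate $(\varepsilon(t-s))^{-1/2}$ with the spectral rate, as you suggest; your proposal does not explain where the two competing exponents $\delta$ and $\alpha-\delta$ actually come from.

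There is also a more structural gap in your alternative (b). The evolution system $\mathcal{T}_\xi(t,s)$ and the estimate $\|\mathcal{T}_\xi(t,s)\|\le\bar C e^{-\mu^\varepsilon(t-s)}$ were constructed in Theorem \ref{T1} under hypotheses {\bf H3} and {\bf H4} and, crucially, under the assumption $\lambda_1^\varepsilon<0$, which makes the family $\{\mathcal{L}^\varepsilon_{\xi(t)}\}$ stable in Pazy's sense. Theorem \ref{T3} explicitly drops {\bf H4} and makes no sign assumption on $\lambda_1^\varepsilon$ (the remark after it emphasises this), so you cannot freely invoke $\mathcal{T}_\xi$ or its smoothing here without rebuilding the semigroup machinery in the sign-indefinite case. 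Your alternative (a), testing against $-\partial_x^2 R$, is in principle viable but runs straight into the singular coefficient $\partial_x f'(U^\varepsilon)$, of size $\varepsilon^{-1}$ near the layer; this is precisely the term the paper neutralises by one integration by parts before applying Young, and you would need an analogous device rather than just absorbing it into the $\varepsilon\alpha|\partial_x^2 R|^2$ coercivity, which gives $H^2$ control but at a cost of $\varepsilon^{-1}$ that then needs to be tracked carefully through the bootstrap. In short: correct high-level strategy and correct identification of the obstacle, but the mechanism that produces the stated power structure is missing, and one of your two proposed tools is unavailable under the hypotheses of this theorem.
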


\begin{remark}{\rm
{As will be clarified later,} the constant $\alpha$ in \eqref{boundresto} is exactly the one defined in hypothesis {\bf H2}. We do not consider the case $\alpha=0$ since it appears only when considering reaction diffusion systems, where the nonlinear term $\mathcal Q^\e$ depends only on the function $v$ (see, for example, \cite{Str13}). 
}
\end{remark}
\begin{remark}{\rm 
Since $\alpha >0$,  the final estimate for $v$ in Theorem \ref{T3} does not depend on the sign of the first eigenvalue $\lambda_1^\e$; indeed, the term $\varepsilon^\delta E_1(0,t) |v_0|_{{}_{H^1}}^2$ goes to zero as $\e \to 0$ whatever the sign of $\lambda_1^\e $ is. Hence, with respect to Theorem \ref{T2}, Theorem \ref{T3} holds for a general class of initial data $v_0 \in H^1$, not only the ones with $H^1$- norm small in $\e$. On the contrary, we also underline that the estimate \eqref{boundresto} for $v$ is weaker that the corresponding one obtained in \eqref{estfin} in Theorem \ref{T1bis}; indeed,   it states that the remainder $R$ tends to $0$ as $\varepsilon^\delta$ instead of $|\Omega^\e|_{{}_{L^\infty}}$, and we shall see in the following that this term behaves like $e^{-1/\e}$. Such deterioration  is a consequence of the necessity of estimating also the first order derivative.
}
\end{remark}

\begin{proof}[Proof of Theorem \ref{T3}]
Since the plan of the proof closely resemble the one used for proving Theorem \ref{T1bis}, we
propose here only the major modifications of the argument. In particular, the key point is how to handle the nonlinear terms.

Setting as usual
\begin{equation*}
	v(x,t)=\sum_{j} v_j(t)\,\varphi^\varepsilon_j(x,\xi(t)),
\end{equation*}
we obtain an infinite-dimensional differential system for the coefficients $v_j$
\begin{equation*}
	\frac{dv_k}{dt}=\lambda^\varepsilon_k(\xi)\,v_k
		+\langle \psi^\varepsilon_k,F_1\rangle+\langle \psi^\varepsilon_k,F_2\rangle,
\end{equation*}
where $F_1$ is defined as before as
\begin{equation*}
	F_1:=H^\varepsilon-\theta^\varepsilon \sum_{j}\Bigl(a_j+\sum_{\ell} b_{j\ell}\,v_\ell\Bigr)v_j,
\end{equation*}
with
\begin{equation*}
	a_j:=\langle \partial_{\xi} \psi^\varepsilon_1, \varphi^\varepsilon_j\rangle\,
			\partial_{\xi}U^{\varepsilon}+\partial_\xi \varphi^\varepsilon_j,
	\qquad
	b_{j\ell}:=\langle \partial_{\xi} \psi^\varepsilon_1, \varphi^\varepsilon_\ell\rangle
			\,\partial_\xi \varphi^\varepsilon_j.
\end{equation*}
The term $F_2$ comes out from the higher order terms $\rho^\varepsilon$ and $\mathcal R^\varepsilon$ and has the following expression
\begin{equation*}
F_2:= \mathcal Q^\varepsilon- \left(  \sum_j  \partial_\xi \varphi_j^\varepsilon v_j+\partial_\xi U^\varepsilon\right) \left\{ \frac{\langle \psi^\varepsilon_1,\mathcal Q^\varepsilon \rangle}{1-\langle\partial_\xi \psi^\varepsilon_1,v \rangle}-\theta^\varepsilon\frac{\langle \partial_\xi \psi^\varepsilon_1,v \rangle^2}{1-\langle\partial_\xi \psi^\varepsilon_1,v \rangle} \right\}.
\end{equation*}
Moreover, we have
\begin{equation*}
|\langle \psi^\varepsilon_k, F_2 \rangle| \leq (1+|\Omega^\varepsilon|_{{}_{L^\infty}}) |v|^2_{{}_{L^2}}+ C |v|^2_{{}_{H^1}},
\end{equation*}
so that, setting as before
\begin{equation*}
	E_k(s,t):=\exp\left( \int_s^t \lambda_k^\varepsilon(\xi(\tau))d\tau\right)
\end{equation*}
and since $v_1=0$,  we have the following expression for the coefficients $v_k$, $k\geq 2$
\begin{equation*}
v_k(t)= v_k(0) E_k(0,t)+ \int_0^t \left\{ \right \langle \psi^\varepsilon_k,F_1\rangle +\langle \psi^\varepsilon_k,F_2\rangle \} E_k(s,t) \, ds.
\end{equation*}
By introducing the function
\begin{equation*}
	z(x,t):=\sum_{k\geq 2} v_k(0)\,E_k(0,t)\,\varphi^\varepsilon_k(x;\xi(t)),
\end{equation*}
we end up with the following estimate for the $L^2$-norm of the difference $v-z$
\begin{equation*}
|v-z|_{{}_{L^2}} \leq \sum_{k \geq 2} \int_0^t \left( \Omega^\e(\xi)(1+|v|^2_{{}_{L^2}}) + |v|^2_{{}_{H^1}}\right) E_k(s,t) \, ds,
\end{equation*}
that is
\begin{equation*}
\begin{aligned}
|v-z|_{{}_{L^2}} \leq& \ C \int_0^t \Omega^\varepsilon(\xi) (t-s)^{-1/2} E_2(s,t)  ds \\
&+ \int_0^t \left( |v-z|^2_{{}_{H^1}}+ |z|^2_{{}_{H^1}}\right) (t-s)^{-1/2} E_2(s,t)  ds,
\end{aligned}
\end{equation*}
where we used
 \begin{equation*}
	\sum_{k\geq 2} E_k(s,t)
		\leq C\,(t-s)^{-1/2}\,E_2(s,t).
\end{equation*}
Now we need to differentiate with respect to $x$ the equation for $v$ in order to obtain an estimate for $|\partial_x(v-z)|_{{}_{L^2}}$. By setting $y=\partial_x v$, we obtain
\begin{equation*}
\partial_t y= \mathcal L^\varepsilon_\xi y + \bar{\mathcal  M}^\varepsilon_\xi v+ \partial_x \left( d\mathcal P^\e[U^\e]\right) v + \partial_xH^\varepsilon(x,\xi)+\partial_x \mathcal R^\varepsilon[v,\xi],
\end{equation*}
where
\begin{equation*}
\bar{\mathcal M}^\varepsilon_\xi v:=-\partial_{ x \xi}U^{\varepsilon}(\cdot;\xi)
			\,\theta^\varepsilon(\xi)\,\langle\partial_{\xi} \psi^\varepsilon_1, v \rangle \quad {\rm and} \quad  \partial_x\left(d \mathcal P^\e[U^\e]\right) v:= \e \, a'(x) \, \partial_x v-dG[U^\e] v.
\end{equation*}
Hence, by setting as usual
\begin{equation*}
y(x,t)=\sum_j y_j(t) \, \varphi_j^\varepsilon (x,\xi(t)),
\end{equation*}
we have
\begin{equation*}
\frac{dy_k}{dt}=\lambda^\varepsilon_k(\xi)\,y_k
		+\langle \psi^\varepsilon_k,F^*\rangle+\langle \psi^\varepsilon_k ,\partial_x\left( d\mathcal P^\e[U^\e] v\right)\rangle +\langle \psi^\varepsilon_k,\partial_x \mathcal R^\varepsilon \rangle,
		\end{equation*}
where
\begin{equation*}
F^*:= \partial_x H^\varepsilon -\sum_j v_j \left \{ \theta^\varepsilon\left[ \partial_{ x \xi} U^\varepsilon \langle \partial_\xi \psi^\varepsilon_1,\varphi_j^\varepsilon \rangle + \partial_\xi \varphi^\varepsilon_j\left( 1 + \sum_{\ell}v_{\ell} \langle \partial_\xi \psi^\varepsilon_1,\varphi_{\ell} \rangle \right)   \right] -\partial_\xi \varphi^\varepsilon_j \rho^\varepsilon\right \}.
\end{equation*}
Moreover, by integrating by parts,  for some $m >0$ there hold
\begin{equation*}
|\langle \psi^\varepsilon_k, \partial_x \mathcal R^\varepsilon \rangle| \leq C |v|^2_{{}_{H^1}}, \quad \langle \psi^\varepsilon_k ,\partial_x\left( d\mathcal P^\e[U^\e] v\right)\rangle \leq \varepsilon^m \left( |dG|_{{}_{L^\infty}}^2+ \e\,  |a'|_{{}_{L^\infty}}^2 \right)+ \frac{1}{\varepsilon^m}\,|v|^2_{{}_{H^1}}.
\end{equation*}
Because of the assumptions on $a$ and $G$, there holds $|dG|_{{}_{L^\infty}}^2+ |a'|_{{}_{L^\infty}}^2 <c$ for some positive constant, so that, by integrating in time and by summing on $k$, we end up with
\begin{equation*}
\begin{aligned}
|y-\partial_x z|_{{}_{L^2}} \leq&\, C \int_0^t \left \{ \Omega^\varepsilon(\xi) (1+ |v|^2_{{}_{L^2}}) +\left(1+\frac{1}{\varepsilon^m}\right)\, |v|^2_{{}_{H^1}} + \varepsilon^m  \right \} \, E_1(s,t) ds \\
&+C \int_0^t \left \{ \Omega^\varepsilon(\xi) (1+ |v|^2_{{}_{L^2}}) +\left(1+\frac{1}{\varepsilon^m}\right)\, |v|^2_{{}_{H^1}} + \varepsilon^m \right \} \, \sum_{k \geq 2} E_k(s,t) ds.
\end{aligned}
\end{equation*}
Now, given $n>0$, let us set
\begin{equation*}
	N(t):= \frac{1}{\varepsilon^n} \sup_{s\in[0,t]} |v-z|_{{}_{H^1}}\,E_1(s,0),
\end{equation*} 
so that we have
\begin{equation}\label{est1}
\begin{aligned}
\frac{1}{\varepsilon^n}E_1(t,0)|v-z|_{{}_{L^2}} \leq& \ C \int_0^t \frac{\Omega^\varepsilon(\xi)}{\varepsilon^n} (t-s)^{-1/2} E_2(s,t) E_1(s,0) ds \\
&+ \int_0^t \frac{1}{\varepsilon^n}\left( |v-z|^2_{{}_{H^1}}+ |z|^2_{{}_{H^1}}\right) (t-s)^{-1/2} E_2(s,t) E_1(s,0) ds.
\end{aligned}
\end{equation}
and
\begin{equation}\label{est2}
\begin{aligned}
\frac{1}{\varepsilon^n}&E_1(t,0)|y-\partial_x z|_{{}_{L^2}} \leq  
C \int_0^t \frac{\Omega^\varepsilon(\xi)}{\varepsilon^n}  \left \{E_1(s,0)+(t-s)^{-1/2} E_s(s,t)\, E_1(s,0)\right \} \,  ds\\
& +C \int_0^t \left \{ \left(\frac{1}{\e^n}+\frac{1}{\e^{n+m} } \right)\,\left( |v-z|^2_{{}_{H^1}} + |z|^2_{{}_{H^1}}\right)+\frac{1}{\varepsilon^{n-m}}  \right \} \, E_1(s,0) ds \\
& +C \int_0^t\left \{ \left(\frac{1}{\e^n}+\frac{1}{\e^{n+m}} \right)\!\!\left( |v-z|^2_{{}_{H^1}} \!+\! |z|^2_{{}_{H^1}}\right)\!+\!\frac{1}{\e^{n-m}}  \right \} (t-s)^{-1/2} E_s(s,t) E_1(s,0) ds. 
\end{aligned}
\end{equation}
By summing \eqref{est1} and \eqref{est2} and since there hold
\begin{align*}
	&\int_0^t e^{(2\Lambda^\varepsilon_2-\Lambda_1^\varepsilon)s}\,ds \leq  \frac{1}{|\Lambda_2^\varepsilon|},	\\
	&\int_0^t (t-s)^{-1/2}\,E_2(s,t)\,ds
		\leq \int_0^t (t-s)^{-1/2}\,e^{\Lambda_2^\varepsilon\,(t-s)}\,ds
		\leq \frac{1}{|\Lambda_2^\varepsilon|^{1/2}},
\end{align*}
we end up with the estimate $N(t) \leq A N^2(t)+B$, with
 \begin{equation*}
		\left\{\begin{aligned}
		A&:= \varepsilon^{-n-m} \, E_1(0,t) (t + |\Lambda_2^\varepsilon|^{-1/2}) ,\\
		B&:=C|\Omega^\varepsilon|_{{}_{L^\infty}}E_1(t,0) \left( t + |\Lambda_2^\varepsilon|^{-1/2} \right) \\
		& \quad+ \varepsilon^{-n-m} |\Lambda_2^\varepsilon|^{-1} |v_0|^2_{{}_{H^1}}+ \varepsilon^{m-n}  E_1(t,0)\, (t +|\Lambda_2^\varepsilon|^{-1/2}).
		\end{aligned}\right.
\end{equation*}
We now use the assumptions on the behavior of $\lambda_k^\e$, $k \geq 2$; since there holds $|\Lambda_2^\varepsilon| \sim \e^{-\alpha}$ for some $\alpha > 0$, if we require $m < \alpha$, for all $n>0$ there holds $N(t)<  B$ that is
\begin{equation}\label{finnonlin}
|v-z|_{{}_{H^1}} \leq C |\Omega^\varepsilon|_{{}_{L^\infty}} + \Bigl( \varepsilon^{\alpha-m} |v_0|^2_{{}_{H^1}}E_1(0,t)+\e^m\Bigr).
\end{equation}
Precisely,  we can choose $m=\alpha-\delta$, for some $\delta \in (0,\alpha)$. Finally, providing  $m>n$, we can choose the final time $T^\varepsilon$ of order $\mathcal O(\e^{-\gamma})$ for some $0<\gamma<1$. Now the proof is completed.
\end{proof}

\begin{remark}{\rm
The  proofs of Theorems \ref{T1bis} and \ref{T3} can be easily extend to the case  $v \in [H^1(I)]^n$ with only minor changes; this is meaningful in light of a possible application of these results in the case of a system of equations of the form \eqref{burgers}. We point out that, in the case $u\in [H^1(I)]^n$, in hypothesis {\bf H2} one should consider the chance of having complex eigenvalues; however, the first eigenvalue has to be real in order to observe a metastable behavior.

}
\end{remark}

\subsection{The slow motion of the shock layer}

\vskip0.5cm
The estimates for the perturbation $v$ obtained in the previous section  can be used to decouple the system \eqref{NL} in order to obtain an equation of motion for the parameter $\xi(t)$. Indeed, both \eqref{stimafinalev}, \eqref{estfin} and \eqref{finnonlin} imply that, for small $\e$, the perturbation $v$ converges to zero as $t \to \infty$. This preludes the following result. 

\begin{proposition}\label{SMshock}
Let either the hypothesis of Theorem \ref{T1}, \ref{T2} or \ref{T3} be satisfied. Let us also assume that
\begin{equation}\label{ipoteta}
	(\xi- \xi^*)\,\theta^\varepsilon(\xi)<0\quad\textrm{ for any } \xi\in I,\,\xi\neq \xi^*
	\qquad\textrm{ and }\qquad
	{\theta^\varepsilon}'( \xi^*)<0.
\end{equation}
Then, for $\varepsilon$ sufficiently small, the solution $\xi(t)$ converges exponentially to $\xi^*$ as $t\to+\infty$.

\end{proposition}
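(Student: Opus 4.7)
My plan is to exploit the smallness of the perturbation $v$ provided by the relevant theorem among \ref{T1}, \ref{T1bis}, \ref{T2}, \ref{T3} to turn the first equation of \eqref{NL} into a small perturbation of the scalar autonomous ODE $\dot\xi = \theta^\e(\xi)$, for which $\xi^*$ is an asymptotically stable equilibrium thanks to hypothesis \eqref{ipoteta}. The output is the claimed exponential decay of $|\xi(t) - \xi^*|$, with rate controlled by $|{\theta^\e}'(\xi^*)|$.

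First I would rewrite the $\xi$-equation in \eqref{NL} as
\begin{equation*}
\frac{d\xi}{dt} = \theta^\e(\xi) + \mathcal{E}^\e(\xi,v), \qquad
\mathcal{E}^\e(\xi,v) := \theta^\e(\xi)\,\langle \partial_\xi\psi^\e_1,v\rangle + \rho^\e[\xi,v].
\end{equation*}
By hypothesis {\bf H1} one has $|\theta^\e(\xi)| \leq C\,|\Omega^\e|_{{}_{L^\infty}}$, and the definition of $\rho^\e$ combined with the bounds on $\mathcal{Q}^\e$ recalled in Section 2 gives $|\rho^\e[\xi,v]| \leq C\bigl(1+|\Omega^\e|_{{}_{L^\infty}}\bigr)\,|v|^2$ in the relevant norm ($L^\infty$, $L^2$ or $H^1$, according to the applicable theorem). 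Since the estimates of Theorems \ref{T1}--\ref{T3} force $|v|\to 0$ uniformly on $[0,T^\e]$ as $\e\to 0$, one concludes that $\mathcal{E}^\e(\xi(t),v(t)) = o(1)$ on this interval.

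Next I would introduce the Lyapunov functional $\Phi(t):=(\xi(t)-\xi^*)^2$ and differentiate, obtaining
\begin{equation*}
\frac{d\Phi}{dt} = 2\bigl(\xi(t)-\xi^*\bigr)\bigl\{\theta^\e(\xi(t)) + \mathcal{E}^\e(\xi(t),v(t))\bigr\}.
\end{equation*}
By \eqref{ipoteta}, $(\xi-\xi^*)\theta^\e(\xi)<0$ for $\xi\neq\xi^*$; a Taylor expansion around $\xi^*$ together with ${\theta^\e}'(\xi^*)<0$ yields $\theta^\e(\xi) = {\theta^\e}'(\xi^*)(\xi-\xi^*)+O((\xi-\xi^*)^2)$, so that setting $\beta^\e := |{\theta^\e}'(\xi^*)|>0$ one gets, for $\xi$ in a neighborhood of $\xi^*$, the differential inequality
\begin{equation*}
\frac{d\Phi}{dt} \leq -2\beta^\e\,\Phi(t) + 2\sqrt{\Phi(t)}\,|\mathcal{E}^\e(\xi(t),v(t))|.
\end{equation*}
A Gronwall-type comparison, using the smallness of $\mathcal{E}^\e$ from the previous step, then yields $\Phi(t) \leq \Phi(0)\,e^{-2\beta^\e t} + o(1)$ and hence $|\xi(t)-\xi^*| \leq |\xi_0-\xi^*|\,e^{-\beta^\e t} + o(1)$ on $[0,T^\e]$, which is the desired exponential convergence.

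The main obstacle I anticipate is the bookkeeping around the smallness of $\beta^\e$: since $\theta^\e$ is itself of order $|\Omega^\e|_{{}_{L^\infty}}$ (that is precisely what produces the metastable slow motion), the damping coefficient in the Lyapunov inequality is small in $\e$, and one has to make sure that $\mathcal{E}^\e$ is strictly subdominant with respect to $\beta^\e\Phi$ rather than merely comparable to it on $[0,T^\e]$. A secondary difficulty is that the above inequality is only local around $\xi^*$, so the argument must be preceded by a qualitative step showing that, for $\e$ small, the sign condition $(\xi-\xi^*)\theta^\e(\xi)<0$ drives the trajectory $\xi(t)$ into such a neighborhood while keeping it inside $I$, despite the perturbation $\mathcal{E}^\e$. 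Both points are handled by fixing $\e$ small enough that $|\mathcal{E}^\e|$ is dominated by $|\theta^\e|$ uniformly on compact subsets of $I\setminus\{\xi^*\}$, after which the local Lyapunov estimate closes the argument.
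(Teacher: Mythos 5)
Your proposal is correct and reaches the same conclusion as the paper, but by a genuinely different route. The paper decouples the $\xi$-equation multiplicatively, writing $\dot\xi = \theta^\e(\xi)(1+r) + \rho^\e$ with $r=\langle\partial_\xi\psi^\e_1,v\rangle$, then replaces $\theta^\e(\xi)$ by its linearization ${\theta^\e}'(\xi^*)(\xi-\xi^*)$ and integrates the resulting linear ODE by separation of variables, absorbing the small factor into the rate $\beta^\e := -{\theta^\e}'(\xi^*)(1+|r|)$. You instead fold all the coupling into an additive error $\mathcal E^\e$, introduce the Lyapunov functional $\Phi=(\xi-\xi^*)^2$, and close the argument via a Gronwall-type differential inequality. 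Both hinge on the same two inputs — the sign/linearization condition \eqref{ipoteta} at $\xi^*$ and the smallness of $v$ supplied by Theorems~\ref{T1}--\ref{T3} — but your Lyapunov route is somewhat more systematic: the quadratic remainder from the Taylor expansion of $\theta^\e$ is absorbed into the differential inequality rather than discarded through an informal ``$\sim$'' as in the paper, and the Gronwall integral makes explicit that the residual offset is of size $\sup|\mathcal E^\e|/\beta^\e$ rather than just ``small.'' You are also right to flag the main delicacy that the paper glosses over: since $\beta^\e$ is itself of order $|\Omega^\e|_{L^\infty}$ (this is precisely what makes the motion slow), the Gronwall forcing term is amplified by $1/\beta^\e$, so one really does need $|\mathcal E^\e|\ll\beta^\e$ — i.e.\ the perturbation estimate must beat the already-small decay rate. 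The paper's asymptotic formula $\xi(t)\sim\xi^*+(\xi_0-\xi^*)e^{-\beta^\e t}+C|\Lambda_2^\e|^{-3/2}|v_0|^2(1-e^{-\beta^\e t})$ implicitly carries this same residual offset; your identification of it as an $o(1)$ term to be controlled is the honest way to state the conclusion.
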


\begin{proof}
Again, we need to divide the proof in two main parts. At first, we consider the case where the nonlinear higher order terms in the equation for the perturbation $v$ can be estimated via the $L^2$ norm of $v$ itself. Since either \eqref{estfin} or \eqref{estfin2} hold, we get
\begin{equation*}
\frac{d\xi}{dt}=\theta^\varepsilon(\xi)(1+r){+\rho^\e} \quad \text{with} \quad |r| \leq  C\left( |\Omega^\varepsilon|_{{}_{L^\infty}} +|\Lambda_2^{\varepsilon}|^{-3/2}|v_0|_{{}_{L^2}} \right)\quad \text{{and}} \quad{|\rho^\e{|\leq C|\Lambda_2^{\varepsilon}|^{-3/2}|v_0|^2_{{}_{L^2}}},}
\end{equation*}
By a method of separation of variables, and since $\theta^\varepsilon(\xi) \sim {\theta^\varepsilon}'(\xi^*) (\xi -\xi^*) $, we end up with
\begin{equation*}
\xi(t) \sim \xi^*+ {(\xi_0-\xi^*)} e^{-\beta^\varepsilon t}+ C|\Lambda_2^{\varepsilon}|^{-3/2}|v_0|^2_{{}_{L^2}} \left(1- e^{-\beta^\e t}\right),
\end{equation*}
where $\beta^\varepsilon:= -{\theta^\varepsilon}'(\xi^*)(1+|r|) \sim -{\theta^\varepsilon}'(\xi^*)$ for small $\e$,
and $\xi^*$ corresponds to the asymptotic location for the parameter $\xi$ (we recall that $U^\varepsilon(x,\xi^*)$ is an exact steady state for the system). Hence, for small $\e$ and large $t$, $\xi$ is converging to $\xi^*$ with exponential rate.
\vskip0.2cm
When the nonlinear term $\mathcal Q^\e$ also depends on the space derivative of $v$, we use \eqref{finnonlin} and we get
$
\frac{d\xi}{dt}= \theta^\varepsilon(\xi)(1+r)+ \rho^\varepsilon,
$
where both $r$ and $\rho^\e$ can be estimated via $ (\varepsilon^m + \varepsilon^{\alpha-m}) + |\Omega^\e|_{{}_{L^\infty}}$.
Again, since both $r$ and $\rho^\e$ go to zero as $\e \to 0$,  in the vanishing viscosity limit there holds
\begin{equation*}
|\xi-\xi^*| \leq{ |\xi_0-\xi^*|}e^{-\beta^\varepsilon t}, \qquad \beta^\e \sim -{\theta^\varepsilon}'(\xi^*),
\end{equation*}
showing the exponentially (slow) motion of the position of the interface towards its equilibrium location $\xi^*$.
\end{proof}

\section{Application to quasilinear viscous scalar conservation laws}

In this Section we mean to apply the general theory previously developed to the specific example of a quasilinear conservation law, i.e. we consider the following  initial-boundary-value  problem 
\begin{equation}\label{Burgers}
 \left\{\begin{aligned}
& \partial_t u = \varepsilon \partial_x\left( a(x) \partial_x u \right)-\partial_x f(u),  &\qquad &x \in I, \ t \geq 0, \\
& u(\pm \ell,t)=u_{\pm}, &\qquad &t \geq 0,\\
& u(x,0)=u_0(x), &\qquad &x \in I,
  \end{aligned}\right.
  \end{equation}
where the flux function $f\in C^2(I)$ satisfies the standard hypotheses  
\begin{equation}\label{ipof}
f''(u) \geq c_0 >0, \quad f'(u_+) < 0 <f'(u_-), \quad f(u_+)=f(u_-).
\end{equation}
This is a well know and simplified prototype of problem \eqref{burgers} in the case of a forcing term $G(u,\partial_x u)$ depending both on $u$ and on its space derivative.
\vskip0.2cm
Formally, in the vanishing viscosity  limit $\varepsilon\to 0^+$, equation \eqref{Burgers}
reduces to a first-order quasi-linear hyperbolic problem on the form
\begin{equation}\label{unviscburgers}
	\partial_t u + \partial_x f(u)=0, \qquad u(x,0)=u_0(x)
\end{equation}
complemented with boundary conditions
\begin{equation}\label{conbordo}
u(-\ell,t)=u_- \quad {\rm and} \quad u(\ell,t)=u_+.
\end{equation}
The standard setting of solutions to \eqref{unviscburgers} is well known, and it is the one given by the {\it entropy formulation}.
Hence, we may have solutions with discontinuities, which propagate with a speed  $s$ dictated by the well known  \textrm{{\sf Rankine--Hugoniot relation}}
\begin{equation*}
 s\ldbrack u\rdbrack=\ldbrack f(u)\rdbrack,
 \end{equation*}
and  that satisfy appropriate {\sf entropy conditions}. Assumptions \eqref{ipof} on the flux function $f$ guarantee that the jump from the value $u_-$ to the value $u_+$ is admissible if and only if $u_->u_+$, and that its speed of propagation $s$ is equal to zero. 

\begin{figure}
\centering
\includegraphics[width=10cm,height=7cm]{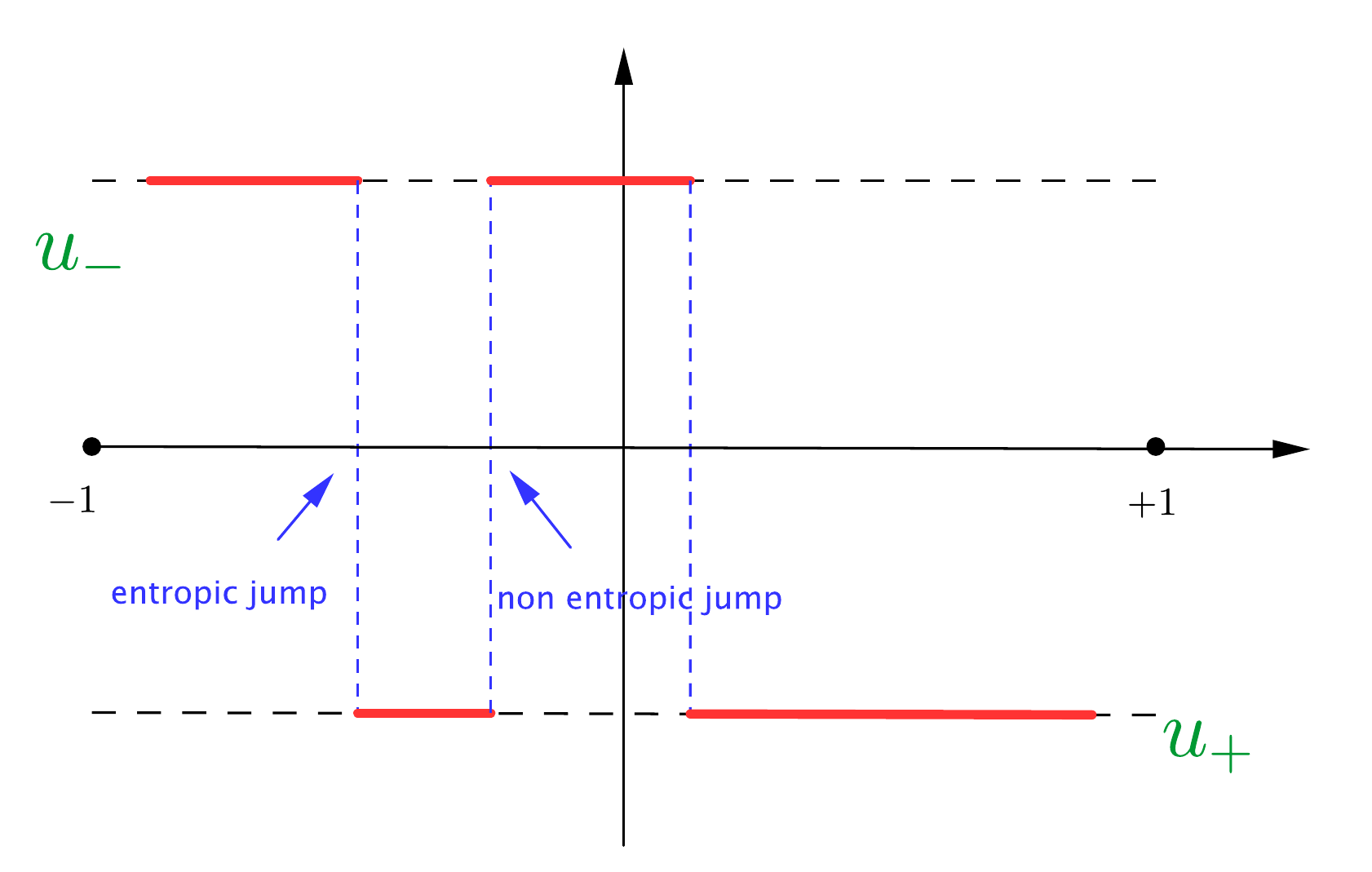}
\caption{\small{ Picture of the steady state to \eqref{unviscburgers}; because of the entropy conditions, the only jumps admitted are the ones from a value $u_->u_+$.}}
\end{figure}

In this case, equation \eqref{unviscburgers} admits a large class of stationary solutions satisfying the boundary conditions, given by all that piecewise constant functions in the form
\begin{equation*}
 u(x) =  \left\{\begin{aligned}
&u_- \quad x \in (-\ell,x_0), \\
&u_+ \quad x \in (x_0,\ell),
   \end{aligned}\right.
\end{equation*} 
where $x_0 \in I$ is a certain point in the interval (see Figure 1). Hence, given $\xi \in (-\ell,\ell)$, we can construct a one-parameter family $\{ U_{{}_{\rm hyp}}(\cdot;\xi)\}$ of steady states, parametrized by $\xi$ that represents the location of the jump, and given by
\begin{equation*}
 U_{{}_{\rm hyp}}(x;\xi)= u_- \chi_{(-\ell,\xi)}(x)+ u_+ \chi_{(\xi,\ell)}(x),
\end{equation*} 
where $\chi_I$ denotes the characteristic function of the interval $I$.

For the initial-boundary value problem \eqref{unviscburgers}-\eqref{conbordo}, it is possible to prove that, starting from an initial datum $u_0$ with bounded variation, every entropy solution converges in finite time to an element of the family  $\{ U_{{}_{\rm hyp}}(\cdot;\xi)\}$. 
\vskip.15cm

For $\varepsilon>0$, the situation is very different. In this case, because of the diffusive term, no discontinuities are admitted, and there is a drastic reduction of the number of stationary solutions; indeed, it is possible to prove (see Section 4.1) that there exists only one single steady state satisfying 
\begin{equation*}
 \left\{\begin{aligned}
&  \varepsilon \partial_x\left( a(x) \partial_x u \right)=\partial_x f(u),   \\
& u(\pm \ell)=u_{\pm}.\\
  \end{aligned}\right.
\end{equation*}
Such solution, denoted here by 
$\bar U^\varepsilon(x)$,
converges pointwise in the limit $\varepsilon\to 0^+$ to a specific element  $U_{{}_{\textrm{hyp}}}(\cdot;\bar\xi)$ of the family 
$\{U_{{}_{\textrm{hyp}}}(\cdot;\xi)\}$, for some $\xi^* \in I$.

Finally, the single steady state $\bar U^\varepsilon$ is asymptotically stable (for more details see the spectral analysis performed in Section 4.2), i.e. starting from an initial datum close to the equilibrium configuration, the time dependent solution  approaches the steady state for $t\to +\infty$.
\vskip0.5cm

\subsection{The stationary problem}

In order to apply to problem \eqref{Burgers}  the general theory developed in Section 3, the first step is the construction of the family of approximate steady states defined in hypothesis {\bf H1}.

Let us then consider the stationary problem for \eqref{Burgers}, that is
\begin{equation}\label{BurgStat}
 \left\{\begin{aligned}
		\varepsilon\,\partial_x\left( a(x)\partial_x u \right) &= \partial_x f(u)\\
 		u(-\ell)=u_-,	\quad & u(\ell)=u_+	 \\
 	 \end{aligned}\right.
\end{equation}
for $x\in (-\ell,\ell)$. This problem has been extensively studied in the case $a(x)=1$ (see, for instance \cite{KreiKrei86}). Let us begin our study with some explicit examples.

\begin{example}{The Burgers equation.}\label{ex1}
{\rm
In the case of Burgers equation, i.e. $f(u)=u^2/2$ and $a(x)\equiv1$, the value $u_+$ coincides with $-u_-$, and the stationary problem reads
\begin{equation*}
\e\partial_x^2 u = \frac{1}{2}\partial_x (u^2), \quad u(\pm \ell)=\mp u^*.
\end{equation*}
By integrating, we obtain an explicit expression for the unique steady state, that is
\begin{equation*}
	\bar U^\varepsilon(x)
		=-\kappa\,\tanh\left(\frac{\kappa\,x}{2\,\varepsilon}\right),
\end{equation*}
where $\kappa=\kappa(\varepsilon,\ell,{u^*})$ is univocally determined once the boundary conditions are imposed.
\vskip0.3cm
Following the general approach introduced in the previous sections, we want to construct  the family of approximate steady states $\{ U^\varepsilon(x;\xi)\}$. There are several choices to built-up such a family (for instance, the approach of \cite{deGrKara98} based on the existence of traveling waves on the whole line). 
We here recall the approach of \cite{MS}, and we consider a function obtained by matching two different steady states satisfying, respectively, the left 
and the right boundary condition together with the request $U^\varepsilon(\xi)=0$; in formulas,
\begin{equation}\label{defUe}
	U^{\varepsilon}(x;\xi)=\left\{\begin{aligned}
		&\kappa_- \tanh \left(\kappa_-(\xi-x)/2\varepsilon\right) &\qquad &{\rm in}\quad (-\ell,\xi) \\
		&\kappa_+ \tanh\left(\kappa_+(\xi-x)/2\varepsilon\right) &\qquad &{\rm in}\quad (\xi,\ell),
           \end{aligned}\right.
\end{equation}
where $\kappa_\pm$ are chosen so that the boundary conditions are satisfied.
By direct substitution we obtain the identity
\begin{equation*}
	{\mathcal P}^\varepsilon[U^{\varepsilon}(\cdot;\xi)]
		=\varepsilon\ldbrack \partial_x U^{\varepsilon}\rdbrack_{{}_{x=\xi}}\delta_{{}_{x=\xi}}
\end{equation*}
in the sense of distributions, where $\delta_{x=\xi}$ the usual Dirac's delta distribution centered 
in $x=\xi$. Going further, we have
\begin{equation*}
	\ldbrack \partial_x U^{\varepsilon}\rdbrack_{{}_{x=\xi}}
		=\frac{1}{2\varepsilon}(\kappa_--\kappa_+)(\kappa_-+\kappa_+).
\end{equation*}
In order to determine the behavior of $\mathcal P^\varepsilon[U^\varepsilon(\cdot;\xi)]$ for small $\varepsilon$, we need an asymptotic description of the values $k_{\pm}$.  Let us set $k_{\pm}:=\mp u_{\pm}(1+h_{\pm})$ and $\Delta_{\pm}:=\ell \mp \xi$. Then it results
\begin{equation*}
\tanh{\left( \mp \frac{u_{\pm}\Delta_{\pm}}{2 \varepsilon}(1+h_{\pm})\right)}=\frac{1}{1+h_{\pm}}.
\end{equation*}
Therefore, the values $h_{\pm}$ {can be chosen} both positive and then
    \begin{equation*}
{0\leq}\tanh{\left( \mp \frac{u_{\pm}\Delta_{\pm}}{2\varepsilon}\right)} {<} \frac{1}{1+h_{\pm}},
\end{equation*}
that gives the asymptotic representation
    \begin{equation*}
h_{\pm} {<}\frac{1}{\tanh{(\mp u_{\pm}\Delta_{\pm}/2\varepsilon)}}-1= \frac{2}{e^{\mp u_{\pm}\Delta_{\pm}/\varepsilon}-1}= 2e^{\pm u_{\pm}\Delta_{\pm}/\varepsilon}+ l.o.t
\end{equation*}
where {\it  l.o.t.} denotes lower order terms. In particular, since $u_\pm =\mp u^*$ for some $u^*>0$, there holds
\begin{equation*}
h_{\pm}  \sim e^{-u^*\Delta_{\pm}/\varepsilon}
\end{equation*}
Finally, since
 \begin{equation*}
[\![\partial_x U^\varepsilon]\!]_{x=\xi}=\frac{1}{2\varepsilon}(k_--k_+)(k_-+k_+){\sim}\frac{u^2_*}{\varepsilon}(h_--h_+),
\end{equation*}
we end up with
    \begin{equation*}
[\![\partial_x U^\varepsilon]\!]_{x=\xi}\,{\sim}\, \frac{u^2_*}{\varepsilon}(e^{-u_*(\ell+\xi)/\varepsilon}-e^{-u_*(\ell-\xi)/\varepsilon})\sim C\,\xi\,e^{-C/\varepsilon},
\end{equation*}
showing that this term is exponentially small for $\varepsilon \to 0$ and it is null when $\xi=0$, that corresponds to the equilibrium location of the shock when $f(u)=u^2/2$. {Hence} we have the following asymptotic expression for the term $\Omega^\e$ defined in \eqref{defomegaeps}
\begin{equation*}
\Omega^\e(\xi) \sim e^{-u_*(\ell+\xi)/\varepsilon}-e^{-u_*(\ell-\xi)/\varepsilon},
\end{equation*} 
that shows that hypothesis {\bf H1} is satisfied in this specific case.
}
\end{example}

\begin{example}{The quasilinear viscous Burgers equation.}
{\rm
In the case of a generic function $a(x)$ satisfying \eqref{ipoa}, the expression for the unique steady state is given by
\begin{equation}\label{defstazionaria}
	\bar U^\varepsilon(x)
		=-\kappa\,\tanh\left(\frac{\kappa\,\left[b(x)-b(l)/2\right]}{2\,\varepsilon}\right)
\end{equation}
where 
\begin{equation*}
b(x):= \int_{-\ell}^x  \frac{1}{a(t)} \, dt,
\end{equation*}
is known to exist because of the assumption \eqref{ipoa}. Let us observe that, if we call $N$ the number of zeroes of $b(x)-b(l)/2$, the number of the layers of the steady state $\bar U^\varepsilon(x)$ defined as in \eqref{defstazionaria} is exactly $N$; hence,   since the only jumps admitted are the ones from a a value  $u_1$ to a value $u_2 <u_1$, and since the boundary conditions are such that $u_- > u_+$, there exists a unique solution to \eqref{BurgStat} if and only if the function $b(x){-b(l)/2}$ has a single zero located at some point $x^* \in I$. In this case, the steady state $\bar U^\e$ will be approximately given by
\begin{equation*}
	\bar U^\varepsilon(x)
		\sim-\kappa\,\tanh\left(\frac{\kappa\,(x-x^*)}{2\,\varepsilon}\right).
\end{equation*}
The family of approximate steady is constructed as in Example \ref{ex1}, recalling that now $\bar U^\e$ is null when $x=x^*$. Hence
\begin{equation*}
	U^{\varepsilon}(x;\xi)=\left\{\begin{aligned}
		&\kappa_- \tanh \left(\kappa_-[(\xi-x^*)-x]/2\varepsilon\right) &\qquad &{\rm in}\quad (-\ell,\xi-x^*) \\
		&\kappa_+ \tanh\left(\kappa_+[(\xi-x^*)-x]/2\varepsilon\right) &\qquad &{\rm in}\quad (\xi-x^*,\ell),
           \end{aligned}\right.
\end{equation*}
In particular, there holds
\begin{equation*}
	{\mathcal P}^\varepsilon[U^{\varepsilon}(\cdot;\xi)]
		=\varepsilon\ldbrack \partial_x U^{\varepsilon}\rdbrack_{{}_{x=\xi-x^*}}\delta_{{}_{x=\xi-x^*}},
\end{equation*}
so that
\begin{equation*}
\Omega^\e(\xi) \sim e^{-u_*(\ell+\xi-x^*)/\varepsilon}-e^{-u_*(\ell-\xi+x^*)/\varepsilon},
\end{equation*} 
which is null when $\xi=x^*$, corresponding to the location of the {interface of the} exact steady state for the problem.
}
\end{example}

\vskip0.2cm
In the general case, if the  flux function $f(x)$ satisfies hypotheses \eqref{ipof}, solutions to \eqref{BurgStat} can be found implicitly  via de formula
\begin{equation}\label{stazimplicita}
	\int_{u(x)}^{u_-} \frac{ds}{\kappa-f(s)}=\frac{1}{\varepsilon}\int_{-\ell}^x a^{-1}(x) \,dx
\end{equation}
where $\kappa\in(f(u_\pm),+\infty)$ is such that
\begin{equation*}
	\Phi(\kappa):=\int_{u_+}^{u_-} \frac{ds}{\kappa-f(s)}=\frac{1}{\varepsilon}\int_{-\ell}^{\ell}a^{-1}(x) \,dx
\end{equation*}
Assumptions \eqref{ipof} on the flux $f$ imply that $\Phi$ is strictly
decreasing and such that 
\begin{equation*}
	\lim_{\kappa\to f(u_\pm)^+} \Phi(\kappa)=+\infty,\qquad
	\lim_{\kappa\to +\infty} \Phi(\kappa)=0.
\end{equation*}
Therefore, for any $\ell>0$, there exists a unique solution to \eqref{stazimplicita} satisfying the boundary conditions.

\vskip0.2cm
As in the previous example, the family $U^\e$ is build up by matching at $\xi{-x^*} \in (-\ell, \ell)$ two different steady stated $U^\e_-$ and $U^\e_+$, solutions to the stationary equations in $(-\ell,\xi{-x^*})$ and $(\xi{-x^*},\ell)$ respectively; hence we have
\begin{equation}\label{soluzapprox}
	U^{\varepsilon}(x;\xi)=\left\{\begin{aligned}
		&U^\varepsilon_-(x;\xi) 	&\qquad &	-\ell<x<\xi-x^*<\ell \\
		&U^\varepsilon_+(x;\xi)	&\qquad &-\ell<\xi-x^*<x<\ell ,
           \end{aligned}\right.
\end{equation}
and 
\begin{equation*}
	U^\varepsilon_-(-\ell;\xi)=u_-,\quad U^\varepsilon_-(\xi-x^*;\xi)=u_\ast
	\qquad\textrm{and}\qquad
	U^\varepsilon_+(\xi-x^*;\xi)=u_\ast,\quad U^\varepsilon_+(\ell;\xi)=u_+,
\end{equation*}
where $u^*$ is such that $f'(u^*)=0$ and $x^*$ is the unique zero of the exact steady state $\bar U^\e(x)$. Moreover, without loss of generality, we assume that $f(u^*)=0$. Note that $u^*=0$ in the case of a Burgers flux $f(u)=u^2/2$; this is consistent with the interpretation of $\xi$ as the location of the interface of $u$.  Once again the error $\mathcal P^\e[U^\e]$ is given by
\begin{equation*}
	{\mathcal P}^\varepsilon[U^{\varepsilon}(\cdot;\xi)]
		=\varepsilon\ldbrack \partial_x U^{\varepsilon}\rdbrack_{{}_{x=\xi-x^*}}\delta_{{}_{x=\xi-x^*}}.
\end{equation*}
Now $U^\varepsilon_-(x;\xi)$ and $U^\varepsilon_+(x;\xi)$ are implicitly given by
\begin{gather*}
\varepsilon\int^{u_-}_{U^\varepsilon_-(x;\xi)}\frac{ds}{k_--f(s)}=\int^x_{-l}a^{-1}(s)ds\quad x\in(-l,\xi-x^*)\\
\varepsilon\int^{u^*}_{U^\varepsilon_+(x;\xi)}\frac{ds}{k_+-f(s)}=\int^x_{\xi-x^*}a^{-1}(s)ds\quad x\in(x-x^*,l)
\end{gather*}
where $k_\pm$ are chosen such that the boundary conditions are satisfied. Then
$$\partial_x U^\varepsilon(x;\xi)=\begin{cases}
\displaystyle{\frac{a^{-1}(x)(f(U^\varepsilon_-(x;\xi))-k_-)}{\varepsilon}}\quad x\in(-l,\xi-x^*)\\
\displaystyle{\frac{a^{-1}(x)(f(U^\varepsilon_+(x;\xi))-k_+)}{\varepsilon}}\quad x\in(\xi-x^*,l),
\end{cases}$$
and we immediately obtain  
\begin{gather*}\ldbrack \partial_x U^{\varepsilon}\rdbrack_{{}_{x=\xi-x^*}}=\frac{1}{\varepsilon a(\xi-x^*)}\left(f(U^\varepsilon_+(\xi-x^*;\xi))-k_+-f(U^\varepsilon_-(\xi-x^*;\xi))+k_-\right)=\\
=\frac{1}{\varepsilon a(\xi-x^*)}\left(k_--k_+\right)\leq\frac{1}{\alpha\varepsilon}|k_--k_+|.
\end{gather*}
Using the following bounds on $f$
\begin{equation*}
	\begin{aligned}
		f(u_\pm)+f'(u_+)(u-u_+) &\leq
			f(u)\leq \frac{f(u_\pm)}{u_\ast-u_+}(u_\ast-u)	&\qquad	&u\in[u_+,u_\ast],\\
		f(u_\pm)-f'(u_-)(u_--u) &\leq
			f(u)\leq \frac{f(u_\pm)}{u_--u_\ast}(u-u_\ast)	&\qquad	&u\in[u_\ast,u_-]
	\end{aligned}
\end{equation*}
we can proceed as in \cite{MS} to estimate $|k_--k_+|$, proving that, for any $\delta\in(0,\ell)$, there exists $C>0$, independent on $\varepsilon$,
such that
\begin{equation*}
	\varepsilon\bigl|\ldbrack \partial_x U^{\varepsilon}\rdbrack_{{}_{x=\xi-x^*}}\bigr|
	\leq \,e^{-C/\varepsilon}
	\qquad\qquad \forall\,\xi-x^*\in(-\ell+\delta,\ell-\delta),
\end{equation*}
showing that hypothesis {\bf H1} is satisfied.

\subsection{Spectral analysis}

We mean to analyze the spectrum of the operator 
$$\mathcal L^\e_\xi v:= \e \partial_x \left( a(x) \partial_x v\right)-\partial_x \left(f'(U^\e) \, v \right), $$
obtained from the linearization of \eqref{burgers} around an element of the family \eqref{soluzapprox}; in particular, we obtain a precise distributions of the eigenvalues $\lambda_k^\e(\xi)$.

This analysis is needed in order to show that the general theory previously developed is applicable in the specific case of quasilinear viscous scalar conservation laws; more precisely, we show that the hypothesis {\bf H2} concerning the distribution of the eigenvalues of the linearized operator is satisfied in a concrete situation.

The eigenvalue problem reads
\begin{equation*}
\e \partial_x(a(x)\partial_x \varphi)- \partial_x(f'(U^\e) \varphi)= \lambda^\e \varphi, \quad \varphi(0)=\varphi( \ell)=0.
\end{equation*}
Firstly, we show that the eigenvalues of $\mathcal L^\e_\xi$ are real. To this aim, let us introduce the self-adjoint operator
\begin{equation*}
\mathcal N^\e_{\xi(t)} \psi := \e \partial_x(a(x)\partial_x \psi)- W^\e(x;\xi(t)) \psi,
\end{equation*}
where
\begin{equation}\label{defW}
W^\e(x;\xi(t)):= \frac{1}{a(x)}\left(\frac{f'(U^\e)}{2}\right)^2+\frac{1}{2} \, \e \, \partial_x f'(U^\e).
\end{equation}
A straightforward computation shows  that $\varphi^\e$ is an eigenfunction for $\mathcal L^\e_\xi$ relative to the eigenvalue $\lambda^\e$ if and only if
\begin{equation*}
\psi^\e(x;\xi)= \exp\left( -\frac{1}{2\e} \int_{x_0}^x \frac{f'(U^\e)(y;\xi)}{a(y)} dy \right) \varphi^\e(x;\xi)
\end{equation*}
is an eigenfunction for the operator $\mathcal N^\e_\xi$ relative to the eigenvalue $\mu^\e = \e \lambda^\e$. Hence
\begin{equation}\label{spettriuguali}
\e \, \sigma(\mathcal L^\e_\xi ) \equiv \sigma(\mathcal N^\e_\xi ),
\end{equation}
so that, since $\mathcal N^\e_\xi$ is self-adjoint, we can state the the spectrum of $\mathcal L^\e_\xi$ is composed by {\bf real eigenvalues}.

Going further, if $u$ is an eigenfunction of $\mathcal L^\e_\xi$ relative to the first eigenvalue $\lambda_1^\varepsilon$,
integrating in $(-\ell,\ell)$ the relation $\mathcal{L}^\varepsilon_\xi u=\lambda_1^\e\,u$, we deduce the identity  
\begin{equation*}
	0=\int_{-\ell}^{\ell}\bigl(\mathcal{L}^{\varepsilon}_\xi-\lambda_1^\varepsilon\bigr)u\,dx
		=\varepsilon \,(a(l)u'(l)-a(-l)u'(-l))-\lambda_1^\varepsilon\int_{-\ell}^{\ell} u(x)\,dx.
\end{equation*}
Assuming, without loss of generality, $u$ to be strictly positive in $(-\ell,\ell)$ and since $a(x)>0$  by assumption, we get $
	\lambda_1^\varepsilon < 0 $. Hence, there holds
\begin{equation*}
	\sigma(\mathcal{L}^{\varepsilon}_\xi)\subset (-\infty,0).
\end{equation*}

\begin{remark}
{\rm
With analogous computations, it is possible to prove that the eigenvalues of the linearized operator obtained from a linearization around the {\bf exact} steady state $\bar U^\e(x)$ are all negative;  this shows  the asymptotic stability of $\bar U^\e(x)$.
}
\end{remark}

\subsubsection{Estimates for the first eigenvalue}

We mean to control from below  $\lambda_1^\e$. To this aim, we estimate the first eigenvalue $\mu^\varepsilon_1$ of the operator $\mathcal{N}^\varepsilon_\xi$, and we use the relation \eqref{spettriuguali}; by means of the inequality
\begin{equation*}
		|\mu_1^\varepsilon|\leq \frac{|\mathcal N^\varepsilon_\xi\,\psi|_{{}_{L^2}}}{|\psi|_{{}_{L^2}}},
\end{equation*}
that holds for smooth test function $\psi$ such that $\psi(\pm \ell)=0$,  we look for a test function $\psi$ such that $\psi(x):=\psi_0^\varepsilon(x)-K^\varepsilon(x)$, where
\begin{equation*}
\psi_0^\varepsilon(x):=\exp\left(\frac{1}{2\varepsilon}\int_{\xi}^{x}\frac{f'(U^\varepsilon(\xi;y))}{a(y)}\,dy\right),
\end{equation*}
and such that there holds
\begin{equation*}
\mathcal{N}^\varepsilon_\xi \psi:=W^\varepsilon\,K^\e.
\end{equation*}
A direct computation, show that $K^\e$ has to solve
\begin{equation*}
\left\{\begin{aligned}
&\partial_x \left( a(x) \partial_x K^\e \right)=0, \\
& K^\e(\pm \ell)=\psi_0^\e(\pm \ell).
\end{aligned}\right.
\end{equation*}
Hence, by integrating, we get
\begin{equation*}
		K^\varepsilon(x):=\left\{\psi^\varepsilon_0(-\ell)+\psi^\varepsilon_0(\ell)\right \} \left( \int_{-\ell}^\ell \frac{dx}{a(x)} \right)^{-1} \, \int_{-\ell}^x \frac{dy}{a(y)}  + \psi_0^\e(-\ell).
\end{equation*} 
Going further,  there holds
\begin{equation*}
		|\mu^\varepsilon_1|\leq \frac{|W^\varepsilon\,K^\varepsilon|_{{}_{L^2}}}{|\psi^\varepsilon_0-K^\varepsilon|_{{}_{L^2}}}
			\leq C\,\frac{|K^\varepsilon|_{{}_{L^2}}}{|\psi^\varepsilon_0|_{{}_{L^2}}-|K^\varepsilon|_{{}_{L^2}}}
			= \frac{C}{|K^\varepsilon|_{{}_{L^2}}^{-1}|\psi^\varepsilon_0|_{{}_{L^2}}-1}
\end{equation*}
as soon as $|\psi^\varepsilon_0|_{{}_{L^2}}>|K^\varepsilon|_{{}_{L^2}}$. We assume $\psi_0(-\ell)\geq \psi_0(\ell)$, the opposite case being similar;
from the definition of $K^\varepsilon$ and from the properties of $a(x)$, it follows
\begin{equation*}
	|K^\varepsilon|_{{}_{L^2}}^2
		\leq 2\ell\,\psi_0^2(-\ell).
\end{equation*}
so that
\begin{equation*}
	|K^\varepsilon|_{{}_{L^2}}^{-2}|\psi^\varepsilon_0|_{{}_{L^2}}^2
		\geq \frac{1}{2\ell}\,\psi_0^{-2}(-\ell)\,\int_{-\ell}^{\ell}  |\psi^\varepsilon_0(x)|^2\,dx.
\end{equation*}
We define 
\begin{equation*}
	I^\varepsilon:=\int_{-\ell}^{\ell}  \exp\left(\frac{1}{\varepsilon}\int_{-\ell}^{x}\frac{f'(U^\varepsilon(\xi;y))}{a(y)}\,dy\right)\,dx.
\end{equation*}
Since $U^\varepsilon$ converges to the step function $U^0:=u_- \, \chi_{(-\ell,\xi)} + u_+ \, \chi_{(\xi,\ell)}$ as $\varepsilon\to 0^+$,
we get
\begin{equation*}
\begin{aligned}
	I^\varepsilon &=\int_{-\ell}^{\ell}  \exp\left(\frac{1}{\varepsilon}\int_{-\ell}^{x}\frac{1}{a(y)}(f'(U^\varepsilon)-f'(U^0))(\xi;y)\,dy\right)
			\exp\left(\frac{1}{\varepsilon}\int_{-\ell}^{x}\frac{f'(U^0(\xi;y))}{a(y)}\,dy\right)\,dx \\
	&\geq  e^{-|f'(U^\varepsilon)-f'(U^0)|_{{}_{L^1}}/\beta\,\varepsilon}\,I^0,
	\end{aligned}
\end{equation*}
where we used $a(x) \leq \beta$. Moreover there holds
\begin{equation*}
	\begin{aligned}
	I^0&= \int_{-\ell}^{\ell} {\rm exp}\left(\frac{1}{\e} \left[\int_{-\ell}^\xi \frac{f'(u^-)}{a(y)} \, dy + \int_{\xi}^{x}\frac{ f'(u^+)}{a(y)} \, dy \right] \right) \, dx \\
	&\geq \int_{-\ell}^{\ell} e^{ f'(u^-) (\xi+\ell)/\beta \e\ +  f'(u^+) (x-\xi)/\beta \e} \, dx \\
	&= e^{\frac{1}{\beta \,\e} f'(u^-) (\xi+\ell)} \, \int_{-\ell}^\ell e^{ \frac{1}{\beta \,\e} f'(u^+) (x-\xi)} \, dx \sim e^{\frac{1}{{\beta}\,\e} f'(u^-) (\xi+\ell)} \, \e.
 	\end{aligned}
\end{equation*}
In particular, if we suppose $|f'(U^\varepsilon)-f'(U^0)|_{{}_{L^1}}\leq c_0\varepsilon$ for some $c_0>0$, we end up with
\begin{equation*}
	|K^\varepsilon|_{{}_{L^2}}^{-1}|\psi^\varepsilon_0|_{{}_{L^2}}	\geq C_1\, \sqrt{\e} \, e^{C_2/\varepsilon}.
\end{equation*}
Thus, for the first eigenvalue $\mu^\varepsilon_1$ of the self-adjoint operator $\mathcal{N}^\varepsilon_\xi$ there holds
the estimate $|\mu^\varepsilon_1|\leq \left(C_1\, {\sqrt{\e}}\,e^{C_2/\varepsilon}\right)^{-1}$ for some positive constant $C_1, C_2$.
As a consequence,  since the spectrum $\sigma(\mathcal{L}^\varepsilon_\xi)$ coincides with 
$\varepsilon^{-1}\sigma(\mathcal{N}^\varepsilon_\xi)$, there holds
\begin{equation}\label{stimaprimoautovalore}
-C\,e^{-C/\varepsilon}\leq \lambda^\varepsilon_1<0.
\end{equation}
\begin{remark}
{\rm
Since $f'$ is a continuous function, the request $|f'(U^\varepsilon)-f'(U^0)|_{{}_{L^1}}\leq c_0\varepsilon$ is satisfied if we require $|U^\varepsilon-U^0|_{{}_{L^1}}\leq c_0\varepsilon$, which is consistent with the convergence of $U^\varepsilon$ to $U^0$ as $\e \to 0$.
}
\end{remark}

\subsubsection{Estimate from above for the second eigenvalue}

\noindent We mean to give an estimate on the behavior of the second and subsequent eigenvalues of the operator $\mathcal L^\e_\xi$. To this aim,  we need some additional assumptions on the limiting behavior of the functions ${a^\e(x)=}f'(U^\varepsilon(x))$ as $\varepsilon\to 0^+$.
Precisely, inspired by \cite{MS},
we suppose that $a^\varepsilon\in C^1([-\ell,\ell])$ and $a\in L^\infty([-\ell,\ell])$ satisfy the following hypotheses:\\\\
{\bf i.} $a^\varepsilon\in C^2([-\ell,\ell]\setminus\{\xi\})$, $a\in C^1([-\ell,\ell]\setminus\{\xi\})$ and
\begin{equation*}
\begin{cases}
\displaystyle{\frac{da^{\varepsilon}}{dx}, \frac{d^2a^{\varepsilon}}{dx^2}<0<a^{\varepsilon},\frac{da}{dx}\quad\textrm{in}\;(-\ell,\xi)}\\\\
\displaystyle{a^{\varepsilon},\frac{da^{\varepsilon}}{dx},\frac{da}{dx}<0<\frac{d^2a^{\varepsilon}}{dx^2}\quad\textrm{in}\;(\xi,\ell),}
\end{cases}
\end{equation*}
\vspace{0.2cm}
{\bf ii.} there exist the left/right first order derivatives of $a^\varepsilon$ at $\xi$ and 
\begin{equation*}
\varepsilon\left|\frac{da^\varepsilon}{dx}\right|\leq C \qquad\textrm{and}\qquad \liminf_{\varepsilon\to 0^+} \,\varepsilon\left|\frac{da^\varepsilon}{dx}(\xi\pm)\right|>0.
\end{equation*}
\vspace{0.2cm}
{\bf iii.} for any $C>0$ there exists $c_{{}_{0}}>0$ such that, if $|x-\xi|\geq c_{{}_{0}}\varepsilon$, then
\begin{equation*}
|f'(U^\varepsilon)-f'(U^0)|\leq C\,\varepsilon,
\end{equation*}
where we recall $f'(U^0)(x):= f'(u_-) \chi_{(-\ell,\xi)} + f'(u_+) \chi_{(\xi,\ell)}$.
\vskip0.2cm
Under these assumptions, it is possible to prove the following Lemma describing 
 the function $W^\varepsilon+\varepsilon\lambda^\varepsilon$, with $W^\varepsilon$ given in \eqref{defW}.

\begin{lemma}\label{lem:propW} 
Let the family $a^\varepsilon$ and the function $a$ be such that assumptions \eqref{ipoa}, {\sf A1-2-3} are satisfied, and let $\lambda^\varepsilon<0$ be such that
\begin{equation*}
\inf_{\varepsilon>0}\varepsilon\lambda^\varepsilon>-\frac{1}{4\beta}\,\alpha_{{}_{0}}^2
\qquad\textrm{where }\; \alpha_{{}_{0}}:=\min\{|f'(u^-)|,|f'(u^+)|\}.
\end{equation*}
Then there exist $\varepsilon_{{}_{0}}>0$ such that, for $\varepsilon<\varepsilon_{{}_{0}}$, the function $W^\varepsilon+\varepsilon\lambda^\varepsilon$ enjoys the following properties:\\\\
{\bf i.} $W^\varepsilon+\varepsilon\lambda^\varepsilon$ is decreasing in $(-\ell,\xi)$
and increasing in $(\xi,\ell)$;\\\\
{\bf ii.} there exist $C, c>0$ such that, for any $x$ with $|x-\xi|\geq c\,\varepsilon$ there holds $W^\varepsilon+\varepsilon\lambda^\varepsilon\geq C>0$;\\\\
{\bf iii.} there exist the left/right limits of $W^\varepsilon+\varepsilon\lambda^\varepsilon$ at $\xi$ and $\beta:=\limsup\limits_{\varepsilon\to 0^+} \bigl(W^\varepsilon(\xi\pm)+\varepsilon\lambda^\varepsilon\bigr)<0.$
\end{lemma}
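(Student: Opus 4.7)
All three conclusions reduce to direct sign analyses of $W^\varepsilon$ and $\partial_x W^\varepsilon$; since $\varepsilon\lambda^\varepsilon$ does not depend on $x$, the monotonicity statement concerns only $W^\varepsilon$, while the pointwise estimates need the non-degeneracy hypothesis $\inf_{\varepsilon>0} \varepsilon \lambda^\varepsilon > -\alpha_0^2/(4\beta)$. For the first property I would compute
\[
\partial_x W^\varepsilon = \frac{a^\varepsilon (a^\varepsilon)'}{2\,a(x)} - \frac{(a^\varepsilon)^2 a'(x)}{4\,a(x)^2} + \frac{\varepsilon}{2}(a^\varepsilon)'',
\]
and read off signs directly: on $(-\ell,\xi)$ the standing hypothesis gives $a^\varepsilon>0$, $(a^\varepsilon)',(a^\varepsilon)''<0$ and $a'>0$, making every summand strictly negative; on $(\xi,\ell)$ the signs flip consistently, so every summand is strictly positive.

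For the third property, the key observation is that by construction of the family $U^\varepsilon$ the interface $\xi$ is exactly the point where $U^\varepsilon=u^*$, and by \eqref{ipof} one has $f'(u^*)=0$. Hence $a^\varepsilon(\xi\pm)=0$ and $W^\varepsilon(\xi\pm) = \tfrac{\varepsilon}{2}(a^\varepsilon)'(\xi\pm)$. The monotonicity assumption forces $(a^\varepsilon)'(\xi\pm)<0$, while the non-degeneracy $\liminf_{\varepsilon\to 0^+}\varepsilon|(a^\varepsilon)'(\xi\pm)|>0$ yields $\limsup_{\varepsilon\to 0^+} W^\varepsilon(\xi\pm)\leq -c<0$ for some $c>0$. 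Since $\lambda^\varepsilon<0$ by the spectral analysis carried out just above, we obtain $\limsup_{\varepsilon\to 0^+}(W^\varepsilon(\xi\pm)+\varepsilon\lambda^\varepsilon)\leq -c$.

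For the second property, I would use the monotonicity just proved to reduce the claim to a lower bound for $W^\varepsilon(\xi\pm c\varepsilon)+\varepsilon\lambda^\varepsilon$. The pointwise closeness $|f'(U^\varepsilon)-f'(U^0)|\leq C\varepsilon$ outside a small $\varepsilon$-neighborhood of $\xi$ gives $(a^\varepsilon)^2\geq \alpha_0^2-O(\varepsilon)$. The delicate ingredient is $\tfrac{\varepsilon}{2}(a^\varepsilon)'$: the uniform bound $\varepsilon|(a^\varepsilon)'|\leq C$ is by itself not enough. To upgrade it to smallness on $\{|x-\xi|\geq c\varepsilon\}$ I would invoke the first integral $\varepsilon\, a(x)\,\partial_x U^\varepsilon = f(U^\varepsilon)-k_\pm$ of the stationary equation, with $k_\pm = f(u_\pm)+O(\varepsilon)$, rewriting
\[
\varepsilon(a^\varepsilon)'(x) = \frac{f''(U^\varepsilon)\bigl(f(U^\varepsilon)-k_\pm\bigr)}{a(x)},
\]
so that the bound $|U^\varepsilon-u_\pm|=O(\varepsilon)$ (a consequence of the closeness assumption combined with $f''\geq c_0>0$) produces $\varepsilon(a^\varepsilon)'=O(\varepsilon)$ away from the interface. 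Combining with $a(x)\leq\beta$ and writing $\inf_\varepsilon \varepsilon\lambda^\varepsilon = -\alpha_0^2/(4\beta)+\delta$ with $\delta>0$, one then obtains $W^\varepsilon(x)+\varepsilon\lambda^\varepsilon \geq \delta/2 + O(\varepsilon)$ for $\varepsilon$ small and $|x-\xi|\geq c\varepsilon$.

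The main difficulty is exactly this last step: the sharp control of $\varepsilon(a^\varepsilon)'$ away from $\xi$. The standing assumptions yield only uniform boundedness, whereas smallness is what one really needs, and only the stationary ODE for $U^\varepsilon$ allows the extra $\varepsilon$-factor to be extracted outside an $O(\varepsilon)$-neighborhood of the interface.
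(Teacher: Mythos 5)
Your proof is correct, and it follows exactly the strategy the paper attributes to Lemma~4.3 of \cite{MS}: compute $\partial_x W^\varepsilon$ and read off the signs of each summand from the standing hypotheses {\sf A1--A3}, reduce the interior lower bound to the boundary of the $c\varepsilon$-neighborhood of $\xi$ by monotonicity, and use $a^\varepsilon(\xi\pm)=f'(u^*)=0$ together with the non-degeneracy $\liminf_{\varepsilon\to 0}\varepsilon|(a^\varepsilon)'(\xi\pm)|>0$ for the limit at $\xi$. (The paper itself only sketches this as ``a straightforward application of the properties of $f'(U^\varepsilon)$ and $a(x)$'' and defers to \cite{MS}; your extra term $-\tfrac{(a^\varepsilon)^2 a'(x)}{4a(x)^2}$ is precisely what the variable diffusion coefficient adds on top of \cite{MS}, and {\sf A1} is tailored so that its sign reinforces rather than competes with the other two.) One small overstatement: you claim that \emph{only} the first integral of the stationary ODE allows the $\varepsilon$-smallness of $\varepsilon(a^\varepsilon)'$ to be extracted away from $\xi$. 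In fact the stated hypotheses already give it without appealing to the ODE: by {\sf A1} the function $|(a^\varepsilon)'|$ is monotone decreasing as one moves away from $\xi$ on each side (concavity/convexity of $a^\varepsilon$), so for $|x-\xi|\geq c\,\varepsilon$ with $c>c_0$ one has $(|x-\xi|-c_0\varepsilon)\,|(a^\varepsilon)'(x)|\leq |a^\varepsilon(x)-a^\varepsilon(\xi\pm c_0\varepsilon)|\leq C\varepsilon$ by {\sf A3}, hence $\varepsilon|(a^\varepsilon)'(x)|\leq C\varepsilon/(c-c_0)$. Your ODE derivation $\varepsilon(a^\varepsilon)'=f''(U^\varepsilon)\bigl(f(U^\varepsilon)-k_\pm\bigr)/a(x)$ is an equally valid and arguably cleaner route, but it is not the only one compatible with the hypotheses as stated.
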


\begin{proof}
The proof lies on a straightforward application of the properties {of the functions} $f'(U^\e)$ {and $a(x)$}, and closely resemble the one of \cite[Lemma $4.3$]{MS}.
\end{proof}

\begin{remark}{\rm 

In the easiest case of the Burgers equation, i.e. $a(x)\equiv1$ and $f(u)=u^2/2$, from the explicit expression of $U^\e$ given in \eqref{defUe} and since $f'(u)=u$, it is easy to check that the assumption we made on $f'(U^\e)$ are satisfied (see also Figure 2).

\begin{figure}
\centering
\includegraphics[width=14cm,height=7cm]{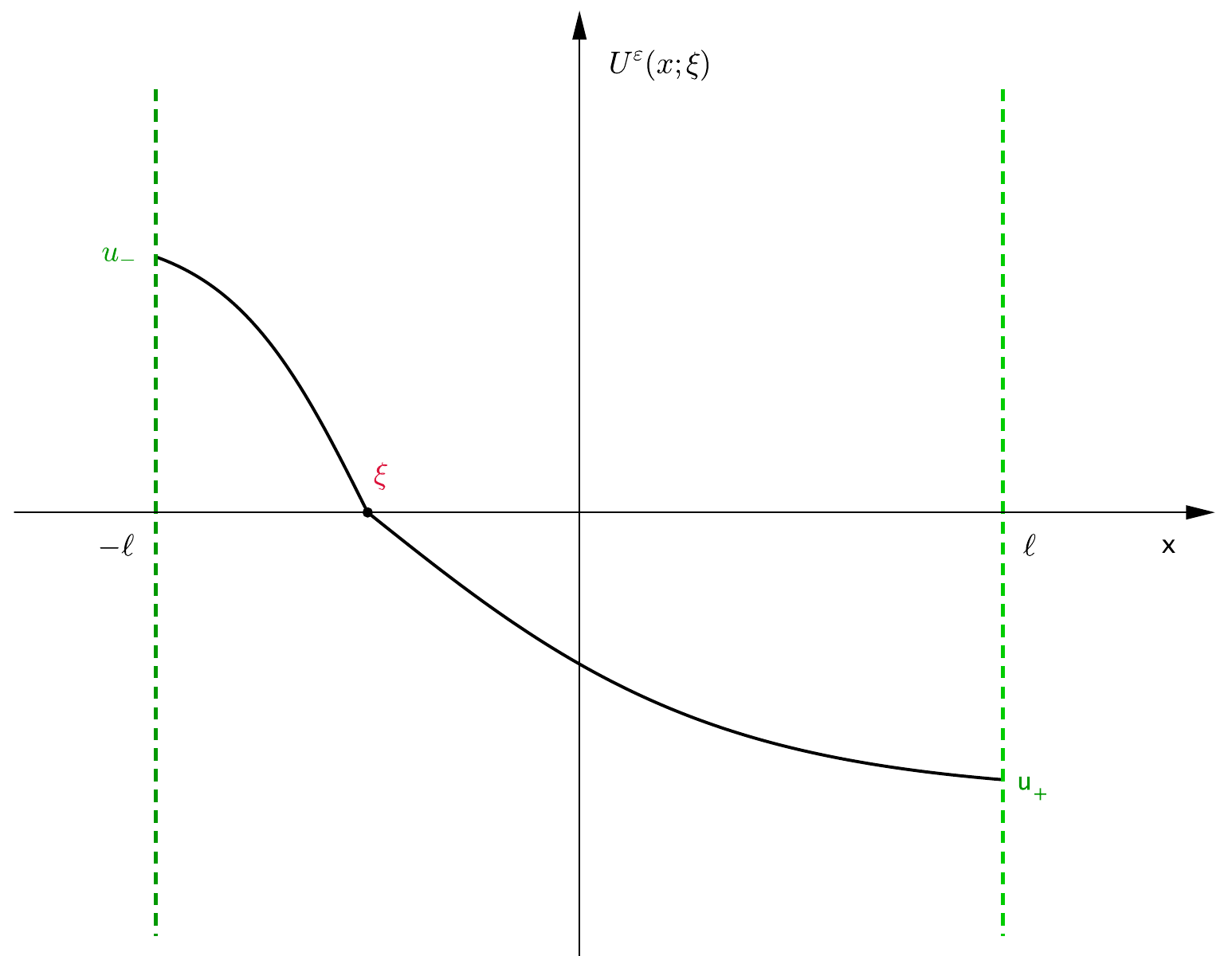}
\caption{\small{The approximate steady state for the Burgers equation. $U^\e$ is obtained by matching two exact steady states in the intervals $(-\ell,\xi)$ and $(\xi,\ell)$; as a consequence, $U^\e$ is a $C^0$ function but its first order derivative has a jump located in $x=\xi$. }}
\end{figure}

}
\end{remark}

From Lemma \ref{lem:propW} we can infer that, in the regime $\varepsilon\to0^+$, $W^\varepsilon+\varepsilon\lambda^\varepsilon$ has two zeros in $[-\ell,\ell]$, denoted here by $y^\varepsilon_\pm$; moreover 
$$-\ell<y^\varepsilon_-<\xi<y^\varepsilon_+<\ell, \quad {\rm and} \quad |y^\varepsilon_\pm-\xi|\leq c_{{}_{0}}\,\varepsilon. $$ 

\vskip.25cm

Let $\lambda_2^\varepsilon$ and $\mu_2^\varepsilon=\varepsilon\,\lambda_2^\varepsilon$ be the second eigenvalues of the operators ${\mathcal L}^\varepsilon_\xi$ and ${\mathcal M}^\varepsilon_\xi$ respectively, with corresponding eigenfunctions $\varphi_2^\varepsilon$ and $\psi_2^\varepsilon$ such that \begin{equation}\label{relpsiphi}
 \psi^\e(x;\xi)= \exp\left( -\frac{1}{2\e} \int_{x_0}^x \frac{f'(U^\e)(y;\xi)}{a(y)} dt \right) \varphi^\e(x;\xi),
 \end{equation}
 and let assume that Lemma \ref{lem:propW} holds with $\lambda^\e=\lambda_2^\varepsilon$. {We stress that we can choose $\lambda^\e=\lambda_2^\varepsilon$ without loss of generality since, if not possible, then it would follow $\lambda_2^\varepsilon\sim-1/\e^\alpha$ with $\alpha>1$, which implies  that \textbf{H2} is trivially satisfied.}

Since $\lambda_2^\varepsilon$ is the second eigenvalue, {applying the Sturm-Liouville theory to the operator $\mathcal N^\e_\xi$, we deduce} that the functions $\varphi_2^\varepsilon$ and 
$\psi_2^\varepsilon$ possess a single root located at some point $x_0^\varepsilon\in(-\ell,\ell)$. The sign properties of $W^\varepsilon+\mu_2^\varepsilon$ described in Lemma \ref{lem:propW} 
imply that $x_0^\varepsilon\in(y_-^\varepsilon, y_+^\varepsilon)$. Indeed, by contradiction, it is easy to verify that if $x_0^\varepsilon\in(-\ell,y^\varepsilon_-)\cup(y^\varepsilon_+,\ell)$, then it would  exists at least one point $\tilde{x}\in(-\ell,y^\varepsilon_-)\cup(y^\varepsilon_+,\ell)$ such that 
$$\partial_x\psi_2^\varepsilon(\tilde{x})=0, \qquad 
\partial_x^2\psi_2^\varepsilon(\tilde{x})\not=0, \qquad 
\psi_2^\varepsilon(\tilde{x})\not=0,
$$ 
and, using the equation solved by $\psi_2^\varepsilon$, this would imply that $\partial_x^2\psi_2^\varepsilon(\tilde{x})$ and $\psi_2^\varepsilon(\tilde{x})$ have the same sign, which is not possible. {Now} $\varphi_2^\varepsilon$ and $\psi_2^\varepsilon$ restricted to the intervals $(-\ell,x_0^\varepsilon)$ and $(x_0^\varepsilon,\ell)$ are eigenfunctions relative to the first eigenvalue of the same operator 
considered in the corresponding  intervals and  with Dirichlet boundary conditions.
 
 Without loss of generality, we can assume  $x^{\e}_0\geq \xi$ and $\varphi_2^\e\geq0$ in $(x_0^\e,\ell)$ and we can restrict our attention  to the interval $J=(x^{\e}_0,\ell)$. By proceeding as in \cite{MS}, integrating on $J$, we get
\begin{equation*}
\lambda^\e_2\,\int_{x^{\e}_0}^{\ell} \phi^{\e}_2\,dx=\varepsilon\,\bigl(a(l)\partial_{x}\phi^\e_2(\ell)-a(x_0^{\e})\partial_{x}\phi^\e_2(x_0^{\e})\bigr)
 <-\varepsilon\,a(x_0^{\e})\partial_{x}\phi^\e_2(x_0^{\e}).
\end{equation*}
If we now assume $\psi^\e_2$ to be as in \eqref{relpsiphi} with $x_0=x^\e_0$ and renormalized so that $\max\psi^\e_2=1$, from the previous equality we deduce
\begin{equation}\label{estsecond1}
|\lambda_2|>\varepsilon a(x_0^\e)\,\partial_{x}\psi^\e_2(x^\e_0)\,I_{\e}^{-1},
\end{equation}
where
\begin{equation*}
I_\e:=\int_{x^\e_0}^{\ell} \exp \left(\frac{1}{2\varepsilon} \int_{x^\e_0}^x\frac{f'(U^{\varepsilon}(y))}{a(y)}\,dy \right)\,dx.
\end{equation*}
In order to get an estimate from below on $|\lambda_2^\e|$, we give an estimate from above on $I_\varepsilon$ and an estimate from below on $\partial_{x}\psi^\e_2(x^\e_0)$.
\begin{equation*}
\begin{aligned}
I_\varepsilon &\leq e^{|a^\varepsilon-a^0|_{{}_{L^1}}/2\varepsilon\alpha}
\int_{x^\e_0}^{\ell} e^{f'(u^-)(x-x^\e_0)/2\varepsilon\alpha}\,dx
=\frac{2\varepsilon\alpha}{|f'(u^-)|}\,e^{|a^\varepsilon-a^0|_{{}_{L^1}}/2\varepsilon\alpha}
\bigl(1-e^{f'(u^-)(\ell-x^\e_0)/2\varepsilon\alpha}\bigr)\\
&\leq\frac{2\varepsilon\alpha}{|f'(u^-)|}\,e^{|a^\varepsilon-a^0|_{{}_{L^1}}/2\varepsilon\alpha}
\leq C\,\varepsilon
\end{aligned}
\end{equation*}
where the last inequality holds since $|a^\varepsilon-a^0|_{{}_{L^1}}\leq C\,\varepsilon$.
Hence,  \eqref{estsecond1} becomes 
\begin{equation}\label{estsecond2}
|\lambda^\e_2|>C\,\frac{d\psi^\e_2}{dx}(x^\e_0)
\end{equation}
for some $C>0$ independent on $\varepsilon$. Now, let $x_M\in[-\ell,\ell]$ be such that $\psi^\varepsilon_2(x_M)=1$; from the properties of $W^\varepsilon+\varepsilon\,\lambda$ stated in Lemma \ref{lem:propW}, it follows that $x_M\in(x^\e_0,y_+)$. Then, by Lagrange Theorem, there exists $x_L\in(x^\e_0,x_M)$ such that
\begin{equation*}
\frac{d\psi^\e_2}{dx}(x_L)=\frac{1}{x_M-x^\e_0}\geq \frac{1}{y_+-\xi}\geq \frac{1}{c_{{}_{0}}\varepsilon}.
\end{equation*}
Since $\psi_2^\e(x_0^\e)=\psi_2^\e(\ell)=0$ and $\psi_2^\e>0$ in $(x_0^\e,\ell)$, we can infer that the function $\psi_2^\e$ is concave in the interval $(x^\e_0,y_+)$, deducing that
\begin{equation*}
\frac{d\psi^\e_2}{dx}(x^\e_0)\geq \frac{d\psi^\e_2}{dx}(x_L) \geq \frac{1}{c_{{}_{0}}\varepsilon}.
\end{equation*}
In conclusion, from \eqref{estsecond2}, we deduce
\begin{equation}\label{stimasecondoautovalore}
|\lambda^\e_2|\geq \frac{C}{\varepsilon}\Longrightarrow\lambda^\e_2\leq -\frac{C}{\varepsilon}
\end{equation}
for some $C$ independent on $\varepsilon$. 

Estimates \eqref{stimaprimoautovalore} and \eqref{stimasecondoautovalore} show that hypotheses {\bf H2-H3-H4} are satisfied in the case of a quasilinear viscous  conservation law.

\subsection{The speed rate of convergence of the shock layer}

We here mean to obtain an asymptotic expression for the term $\theta^\e(\xi)$; indeed, recalling the equation for $\xi$ in \eqref{NL}, and supposing the perturbation $v$ to be small, we have
\begin{equation*}
\frac{d\xi}{dt} \approx \theta^\e(\xi).
\end{equation*}
Hence, the function $\theta^\e$ gives a good approximation of the speed  rate of convergence of the solution towards its asymptotic steady state.

Without loss of generality, we may assume $x^*$, the unique zero of the exact steady state, to be equal to zero.

\vskip0.2cm
Since $\theta^\e(\xi):= \langle \psi_1^\e(\xi), \mathcal P^\e[U^\e]\rangle$, we need an expression for $\psi_1^\e$, the first eigenfunction of the adjoint linearized operator $\mathcal L^{\e,*}_\xi$, defined as
\begin{equation*}
\mathcal L^{\e,*}_\xi v := \e\partial_x \left( a(x)\partial_x v\right)+f'(U^\e) \partial_x v.
\end{equation*}
Following the idea of \cite{MS}, for $\varepsilon\sim 0$, the eigenfunction $\psi_1^\varepsilon$ is close to the eigenfunction
of  ${\mathcal L}^{0,\ast}_\xi$ relative to the eigenvalue $\lambda=0$, where
\begin{equation*}
	f'(U^{0})(x;\xi):=f'(u_-)\chi_{{}_{(-\ell,\xi)}}(x)+f'(u_+)\chi_{{}_{(\xi,\ell)}}(x)
\end{equation*}
Hence, $\psi_1^0(x)$ solves
\begin{equation*}
\left\{\begin{aligned}
&\e\partial_x (a(x)\partial_x \psi_1^0)+f'(u_-) \partial_x \psi_1^0 =0, \quad {\rm for } \ \  x \in (-\ell,\xi), \\
&\e\partial_x (a(x)\partial_x \psi_1^0)+f'(u_+) \partial_x \psi_1^0 =0, \quad {\rm for } \ \  x \in (\xi, \ell), \\
&\psi_1^0(-\ell)=0, \quad \psi_1^0(\ell)=0, \quad \ldbrack \psi_1^0 \rdbrack_{x=\xi}=0.
\end{aligned}\right.
\end{equation*}
By integrating in $(-\ell, \xi)$ and $(\xi,\ell)$ respectively, and by imposing the boundary conditions and the condition on the jump, we obtain the following expression for $\psi_1^0$
\begin{equation*}
	\psi_1^0(x):=\left\{\begin{aligned}
			&(1-e^{f'(u_+)(b(\ell)-b(\xi))/\varepsilon})(1-e^{-f'(u_-)(b(\ell)+b(x))/\varepsilon})
					&\qquad	&x<\xi,\\
			&(1-e^{-f'(u_-)(b(\ell)+b(\xi))/\varepsilon})(1-e^{f'(u_+)(b(\ell)-b(x))/\varepsilon})
					&\qquad	&x>\xi,		
		\end{aligned}\right.
\end{equation*}
being $b(x):= \int a^{-1}(x) \, dx$. In particular, in the limit $\varepsilon\to 0$, we obtain
$\psi_1^\varepsilon\approx 1$ so that
\begin{equation*}
 	\theta^\varepsilon(\xi)\approx \langle 1,{\mathcal P^\e[U^{\varepsilon}] \rangle} \approx
		e^{-C/\varepsilon}.
\end{equation*}
This estimate show that the speed of the interface is  {\it exponentially small} when $\e$ is small; for example, in the special case of the Burgers equation there holds
\begin{equation*}
\theta^\e (\xi) \sim e^{-u_*(\ell+\xi)/\varepsilon}-e^{-u_*(\ell-\xi)/\varepsilon},
\end{equation*}
showing that the hypotheses stated in Proposition \ref{SMshock}, equation \eqref{ipoteta}, are satisfied.

\section{Appendix A}

In this Appendix we collect some useful results obtained in \cite{Pazy83}.
Let us consider the initial value problem 
\begin{equation}\label{pazy}
\partial_t u = A(t) u+ f(t), \quad u(s)=u_0 \qquad 0 \leq s \leq t \leq T.
\end{equation}

\begin{ans}\label{def1}
Let $X$ a Banach space. A family $\{ A(t)\}_{t \in [0,T]}$ of infinitesimal generators of $C_0$ semigroups on $X$ is called stable if there are constants $M \geq 1$ and $\omega$ (called the stability constants) such that
\begin{equation*}
(\omega,+\infty) \subset \rho(A(t)), \quad {\rm for} \quad t \in [0,T]
\end{equation*}
and
\begin{equation*}
\left \| \Pi_{j=1}^k R(\lambda: A(t_j))\right \|  \leq M(\lambda-\omega)^{-k}, \quad {\rm} 
\end{equation*}
for $\lambda>\omega$ and for every finite sequence $0 \leq t_1 \leq t_2, . . . . , t_k \leq T$, $k=1,2,....$.
\end{ans}
If, for $t \in [0,T]$, $A(t)$ is the infinitesimal generator of a $C_0$ semigroup $S_t(s)$, $s \geq 0$ satisfying $\| S_t(s)\| \leq e^{\omega s}$, then the family $\{ A(t)\}_{t \in [0,T]}$ is clearly stable with constants $M=1$ and $\omega$. Precisely, if  the operator $A(t)$ generates a $C_0$ semigroup $S_t(s)$ for every fixed $t \in [0,T]$, and we can find an estimate for $\| S_t(s)\|$ that is independent of $t$, then the whole family $\{ A(t)\}_{t \in [0,T]}$ is stable in the sense of Definition \ref{def1}.

\begin{theorem}\label{thpaz1}
Let $\{ A(t)\}_{t \in [0,T]}$ be a stable family of infinitesimal generators with stability constants $M$ and $\omega$. Let $B(t)$, $0 \leq t \leq T$ be a bounded linear operators on $X$. If $\| B(t)\| \leq K$ for all $t \leq T$, then $\{ A(t)+ B(t)\}_{t \in [0,T]}$ is a stable family of infinitesimal generators with stability constants $M$ and $\omega+ MK$.
\end{theorem}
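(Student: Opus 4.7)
Proof plan. The proof splits into two parts. First, one shows that for each fixed $t$ the operator $A(t) + B(t)$ generates a $C_0$ semigroup: this is the classical bounded perturbation theorem for generators, since $A(t)$ generates a $C_0$ semigroup and $B(t)$ is bounded. Second, one must verify the stability bound
$$\left\|\prod_{j=1}^k R(\lambda; A(t_j) + B(t_j))\right\| \leq \frac{M}{(\lambda-\omega-MK)^k}$$
for every $\lambda > \omega + MK$ and every finite sequence $0 \leq t_1 \leq t_2 \leq \cdots \leq t_k \leq T$, which together with the generation property above realises the definition of stability with constants $M$ and $\omega + MK$.

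For the resolvent estimate, the $k=1$ instance of the stability of $\{A(t)\}$ gives $\|R(\lambda; A(t))\| \leq M/(\lambda-\omega)$, hence $\|B(t)\,R(\lambda; A(t))\| \leq MK/(\lambda-\omega)$, which is strictly less than $1$ as soon as $\lambda > \omega + MK$. Thus the Neumann series
$$R(\lambda; A(t) + B(t)) = \sum_{n=0}^{\infty} R(\lambda; A(t))\bigl(B(t)\,R(\lambda; A(t))\bigr)^{n}$$
converges in operator norm. Expanding the product of $k$ perturbed resolvents yields a sum indexed by $\mathbf{n} = (n_1,\ldots,n_k) \in \N^k$ whose generic term is a product of $k+N$ resolvents (with $N := n_1+\cdots+n_k$) interleaved with $N$ bounded operators $B(t_j)$, in which the resolvent index $t_j$ appears in non-decreasing order and with multiplicity $n_j+1$. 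Granting the term-wise bound $MK^N/(\lambda-\omega)^{k+N}$, summing using the generating identity $\sum_{N \geq 0}\binom{N+k-1}{k-1}\,x^N = (1-x)^{-k}$ evaluated at $x = MK/(\lambda-\omega)$ delivers exactly $M/(\lambda-\omega-MK)^k$, which is the desired conclusion.

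The hard step is precisely this term-wise bound. A direct application of submultiplicativity of the operator norm, estimating each factor separately, produces the weaker $M^{k+N}K^N/(\lambda-\omega)^{k+N}$, which after summation gives $M^k$ in place of the sharp constant $M$. To recover the correct factor one has to invoke the stability hypothesis on the full ordered product of resolvents as a single entity: the interspersed operators $B(t_j)$ contribute only through the bound $\|B(t_j)\|\leq K$ and must be reabsorbed into the string without inflating the stability constant. Concretely, this can be done by induction on the pair $(k,N)$, using the resolvent identity $R(\lambda; A+B) = R(\lambda; A) + R(\lambda; A+B)\,B\,R(\lambda; A)$ to peel off one factor at a time and apply stability to the remaining block of resolvents with repeated indices arranged in non-decreasing order. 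This combinatorial bookkeeping is the technical heart of the argument; everything else is routine geometric summation.
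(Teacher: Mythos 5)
The paper does not actually prove Theorem~\ref{thpaz1}: it is quoted from Pazy's book \cite{Pazy83} (Theorem 2.3 of Chapter 5) with no argument given. So there is no ``paper's own proof'' to compare against; the relevant reference point is Pazy's original argument, and your proposal does in fact take the same basic route (bounded perturbation theorem for a single $t$, Neumann expansion of each factor, expansion of the $k$-fold product, and geometric resummation).

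However, there is a genuine error in the term-wise bound, and it distorts your reading of where the difficulty lies. You claim each expanded term is bounded by $MK^{N}/(\lambda-\omega)^{k+N}$ and that the hard part is to ``reabsorb the $B$'s without inflating the stability constant $M$''. Two problems with this. First, that bound is too strong: if it held, summing with the correct variable $x=K/(\lambda-\omega)$ would yield $M/(\lambda-\omega-K)^{k}$, i.e.\ the theorem with stability shift $\omega+K$ rather than $\omega+MK$, which is not what is asserted and is not true in general when $M>1$. (Note also that your own summation uses $x=MK/(\lambda-\omega)$, which is incompatible with the factor $K^{N}$ you wrote; the two pieces of the argument don't match.) Second, and more importantly, the idea that one must achieve the bare prefactor $M$ at the level of each individual term is a red herring. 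The correct term-wise bound is
\begin{equation*}
\Bigl\|R_{1}(B_{1}R_{1})^{n_{1}}\cdots R_{k}(B_{k}R_{k})^{n_{k}}\Bigr\|
\;\le\; \frac{M^{N+1}K^{N}}{(\lambda-\omega)^{k+N}},\qquad N=n_{1}+\cdots+n_{k},
\end{equation*}
and it is \emph{easy}: the $N$ interspersed operators $B(t_{j})$ cut the string of $k+N$ resolvents into exactly $N+1$ maximal consecutive blocks, each of which is a product of resolvents with non-decreasing time indices (because the overall list $t_{1},\dots,t_{1},t_{2},\dots,t_{k}$ is non-decreasing, and any consecutive sub-run of a non-decreasing list is non-decreasing). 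Applying the stability hypothesis block by block gives one factor of $M$ per block, i.e.\ $M^{N+1}$, and one factor $(\lambda-\omega)^{-1}$ per resolvent, together with $K^{N}$ from the $B$'s. Then, since there are $\binom{N+k-1}{k-1}$ multi-indices with $|\mathbf{n}|=N$, the geometric series
\begin{equation*}
\sum_{N\ge 0}\binom{N+k-1}{k-1}\,\frac{M^{N+1}K^{N}}{(\lambda-\omega)^{k+N}}
=\frac{M}{(\lambda-\omega)^{k}}\,\Bigl(1-\frac{MK}{\lambda-\omega}\Bigr)^{-k}
=\frac{M}{(\lambda-\omega-MK)^{k}}
\end{equation*}
gives precisely the stated estimate. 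So the ``inflation'' to $M^{N+1}$ is both inevitable and harmless---the extra powers of $M$ are exactly what produces the $MK$ rather than $K$ in the denominator. The induction via the second resolvent identity that you sketch is unnecessary once the block decomposition is observed. Correcting the term-wise bound and dropping the reabsorption step yields a complete and standard proof.
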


Now we prove the existence of the so  called {\it evolution system} $U(t,s)$ for the initial value problem \eqref{pazy}, that is a generalization of the semigroup generated by a linear operator $A$, when such operator depends on time.  To this aim, let us state the following result (for more details, see \cite[Theorem 2.3, Theorem 3.1, Theorem 4.2]{Pazy83}).

\begin{theorem}\label{thpaz3}
Let $\{ A(t)\}_{t \in [0,T]}$ be a stable family of infinitesimal generators of $C_0$ semigroups on $X$. If $D(A(t))=D$, {that is the domain of $A(t)$ is independent on $t$,} and for $u_0 \in D$, $A(t)u_0$ is continuously differentiable in $X$, then there exists a unique evolution system $U(t,s)$, $0 \leq s \leq t \leq T$, satisfying 
 \begin{equation*}
 \| U(t,s) \| \leq Me^{\omega (t-s)}, \quad {\rm for } \quad 0 \leq s \leq t \leq T.
 \end{equation*}
Morevoer, if $f \in C([s,T],X)$, then, for every $u_0 \in X$, the initial value problem \eqref{pazy} has a unique solution given by 
 \begin{equation*}
 u(t)= U(t,s)u_0 + \int_s^t U(t,r) f(r) \ dr.
 \end{equation*}
for all $0 \leq s \leq t \leq T$.
\end{theorem}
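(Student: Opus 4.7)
The plan is to build the evolution system via the classical piecewise‐autonomous approximation argument, following the strategy of Kato for non‐autonomous generators. Partition $[s,T]$ as $s=\tau_0^n<\tau_1^n<\cdots<\tau_{k_n}^n=T$ with mesh $\delta_n\to 0$, and freeze the generator on each subinterval: define $A_n(\tau):=A(\tau_j^n)$ for $\tau\in[\tau_j^n,\tau_{j+1}^n)$. Since each $A(\tau_j^n)$ generates a $C_0$-semigroup $S_{\tau_j^n}(\cdot)$, the piecewise autonomous problem has an explicit evolution operator obtained by splicing semigroups,
\[
U_n(t,s):=S_{\tau_i^n}(t-\tau_i^n)\,S_{\tau_{i-1}^n}(\tau_i^n-\tau_{i-1}^n)\cdots S_{\tau_j^n}(\tau_{j+1}^n-s),
\]
for $\tau_j^n\le s<\tau_{j+1}^n$ and $\tau_i^n\le t<\tau_{i+1}^n$. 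The stability hypothesis on $\{A(t)\}$, applied to the finite sequence of frozen generators (this is precisely the resolvent-product estimate in Definition \ref{def1}), delivers the uniform bound $\|U_n(t,s)\|\le M\,e^{\omega(t-s)}$, independent of $n$ and of the chosen partition.

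The next step is to show that for $u_0\in D$, the family $\{U_n(t,s)u_0\}_n$ is Cauchy in $X$. The driving identity is
\[
\frac{d}{dr}\bigl[U_n(t,r)\,U_m(r,s)u_0\bigr]=U_n(t,r)\bigl[A_m(r)-A_n(r)\bigr]U_m(r,s)u_0,
\]
valid away from the finitely many partition points of both meshes. Integrating in $r\in[s,t]$, using the uniform bound on $U_n,U_m$, and invoking the assumption that $A(\tau)u_0$ is continuously differentiable in $\tau$ (so that $A_n(r)u_0\to A(r)u_0$ uniformly and $U_m(r,s)u_0$ remains in the common domain $D$ with a controlled modulus), one obtains $\|(U_n(t,s)-U_m(t,s))u_0\|\to 0$ as $n,m\to\infty$. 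Density of $D$ in $X$ together with the uniform bound extends the limit $U(t,s):=\lim_n U_n(t,s)$ to a bounded operator on $X$ satisfying $\|U(t,s)\|\le M\,e^{\omega(t-s)}$.

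The two‐parameter family $U(t,s)$ inherits the evolution system identities $U(s,s)=I$ and $U(t,r)U(r,s)=U(t,s)$ by passing to the limit in the corresponding relations for $U_n$, while strong continuity in $(t,s)$ follows from the uniform bound combined with the explicit semigroup structure on the dense subset $D$. Uniqueness is a direct Gr\"onwall argument on $D$: any two candidate evolution systems coincide on $D$ because they solve the same Cauchy problem there, and then on all of $X$ by density. For the inhomogeneous equation, I would verify the variation-of-constants formula $u(t)=U(t,s)u_0+\int_s^t U(t,r)f(r)\,dr$ by differentiating under the integral, which is justified by continuity of $f$ on $[s,T]$, strong continuity of $U$, and the evolution law.

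The main obstacle is the Cauchy estimate of the second step: controlling $A_m(r)-A_n(r)$ requires simultaneously using the continuous differentiability hypothesis on $A(\cdot)u_0$ and the fact that $U_m(r,s)u_0$ stays in the common domain $D$ with a quantitative bound on $\|A(r)U_m(r,s)u_0\|$. This is precisely where the hypothesis $D(A(t))\equiv D$ independent of $t$ is indispensable; without it, one cannot even make sense of the composition $A_n(r)U_m(r,s)u_0$ pointwise in $r$, and the whole scheme breaks down.
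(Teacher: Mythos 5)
The paper does not prove Theorem~\ref{thpaz3}: it states it as a quoted result from Pazy's monograph, pointing the reader to \cite[Theorems~2.3, 3.1, 4.2]{Pazy83} (Chapter~5 there). So there is no ``paper's own proof'' to compare against, and your sketch should be measured against Pazy's original argument. The strategy you describe --- freeze the generator on a mesh, splice the frozen semigroups into $U_n(t,s)$, use the stability hypothesis for the uniform bound $\|U_n(t,s)\|\le Me^{\omega(t-s)}$, then show Cauchy-ness via the commutator identity $\tfrac{d}{dr}[U_n(t,r)U_m(r,s)u_0]=U_n(t,r)[A_m(r)-A_n(r)]U_m(r,s)u_0$ --- is precisely the Kato--Pazy construction, so the high-level plan is correct.

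There is, however, a genuine gap at the step you yourself flag as the main obstacle. To integrate the commutator identity and conclude Cauchy-ness, you need to bound
\begin{equation*}
\int_s^t \bigl\|U_n(t,r)\,[A_m(r)-A_n(r)]\,U_m(r,s)u_0\bigr\|\,dr,
\end{equation*}
and for this you invoke that ``$U_m(r,s)u_0$ remains in the common domain $D$ with a controlled modulus.'' What is actually required is a bound on the graph norm $\|U_m(r,s)u_0\|_D = \|U_m(r,s)u_0\| + \|A(r)U_m(r,s)u_0\|$ that is \emph{uniform in $m$ and in $r\in[s,t]$}. The convergence $A_n(r)u_0\to A(r)u_0$ coming from the $C^1$ hypothesis controls the action of $A_m(r)-A_n(r)$ on the fixed vector $u_0$, not on the moving vector $U_m(r,s)u_0$, and since $A_m(r)-A_n(r)$ is a difference of unbounded operators this distinction is fatal if left unaddressed. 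In Pazy's development this is exactly what is supplied by a separate lemma (Theorem~5.2.3 in \cite{Pazy83}), which shows that stability on $X$, together with the common domain $D$ and the $C^1$ dependence $t\mapsto A(t)y$, implies stability of the restricted family $\{A(t)\big|_{Y}\}$ on the Banach space $Y=D$ equipped with the graph norm of $A(0)$; that $Y$-stability is condition (H2) of \cite[Section~5.3]{Pazy83}, and deducing it involves a nontrivial renorming and a resolvent-identity bootstrap. Once $Y$-stability is available, $\sup_{m,r}\|U_m(r,s)u_0\|_Y<\infty$ follows and your Cauchy estimate closes; without it, the scheme as written does not. A further, minor remark: the variation-of-constants formula for general $u_0\in X$ produces only a \emph{mild} solution, and asserting it solves the initial value problem by ``differentiating under the integral'' is not available at that level of regularity; the classical solvability statement in Pazy's Theorems~4.2/4.3 is reserved for data $u_0\in D$ with sufficiently regular $f$.
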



\end{document}